\definecolor{vegasgold}{rgb}{0.77, 0.7, 0.35}
\definecolor{darkgoldenrod}{rgb}{0.72, 0.53, 0.04}
\definecolor{gold(metallic)}{rgb}{0.83, 0.69, 0.22}
\DeclareFontFamily{U}{wncy}{}
\DeclareFontShape{U}{wncy}{m}{n}{<->wncyr10}{}
\DeclareSymbolFont{mcy}{U}{wncy}{m}{n}
\DeclareMathSymbol{\Sh}{\mathord}{mcy}{"58}
\tikzset{every loop/.style={min distance=10mm,looseness=10}}
\tikzstyle{vertex}=[auto=left,circle,minimum size=1pt,inner sep=0pt]
\newtheorem{thm}{Theorem}[section]
\newtheorem{prop}[thm]{Proposition}
\newtheorem{cor}[thm]{Corollary}
\theoremstyle{definition}
\newtheorem{rmk}[thm]{Remark}
\newcommand{\Z}{\mathbb{Z}}
\newcommand{\Q}{\mathbb{Q}}
\numberwithin{equation}{section}
\begin{document}

\title [Imaginary bicyclic biquadratic number fields]{Class numbers of Imaginary bicyclic biquadratic number fields}

\author[A. Jakhar]{Anuj Jakhar\, \orcidlink{0009-0007-5951-2261}}
\address[Jakhar]{Department of Mathematics, Indian Institute of Technology Madras, Chennai, Tamil Nadu, India-600036}
\email{Anujjakhar@iitm.ac.in}

\author[R.~Kalwaniya]{Ravi Kalwaniya\, \orcidlink{0009-0008-6964-5276}}
\address[Kalwaniya]{Indian Institute Of Technology, Chennai, Tamil Nadu 600036, India}
\email{ravikalwaniya3@gmail.com}

\author[M. K. Ram]{Mahesh Kumar Ram\, \orcidlink{0000-0002-5692-5761}}
\address[Ram]{Department of Mathematics, Indian Institute of Technology Madras, Chennai, Tamil Nadu, India-600036}
\email{maheshkumarram621@gmail.com}


\keywords{ Class numbers, Class groups, Discriminants}
\subjclass[2020]{ 11R11, 11R29}

\maketitle
 
\begin{abstract}
For any fixed positive integer 
$n$, we provide a method to compute all imaginary bicyclic biquadratic number fields with class number $n$, along with their class group structures, using the list of all imaginary quadratic number fields whose class numbers divide $2n$. We apply this method to list all imaginary bicyclic biquadratic number fields with class numbers $4$, $6$ and $7$. We also present the class group structure of each subfield of these fields.
\end{abstract}

\section{Introduction}

\subsection{Motivation and historical context}
Imaginary bicyclic biquadratic fields are degree $4$ extensions of $\Q$ of the form $\Q(\sqrt{-x},\sqrt{y})$ or $\Q(\sqrt{-x},\sqrt{-y})$, where $x$ and $y$ are square-free positive integers. These fields have attracted interest due to their rich arithmetic properties and connections to the theory of quadratic and abelian extensions.

In \cite{BP}, Brown et al. listed all imaginary bicyclic biquadratic fields having class number $1$, while Buell et al. \cite{BW} provided a complete list of such fields with class number 2. Conner et al. \cite{PJ} investigated forms of imaginary bicyclic biquadratic fields with odd class numbers. Using this result, Jung et al. \cite{SS} listed all such fields having class number $3$,  and for class number 
$5$, a complete list was given by Basilla et al. \cite{BF24} by using similar arguments as in \cite{SS}.

Imaginary quadratic fields are well understood up to class number $100$. In this article, we use those with class numbers up to 
$14$ to list imaginary bicyclic biquadratic fields with class numbers $4,\; 6$ and $7$. For any integer $n\geq 1$, if all the imaginary quadratic fields with class numbers up to $2n$ are known, then our method is effective in determining all imaginary bicyclic biquadratic fields with class number $n$. 

The study of class numbers of imaginary abelian number fields is long-standing. For instance, Yamamura~\cite{Y} proved that there exist exactly \(172\) imaginary abelian number fields with class number one. Similarly, Setzer~\cite{B} determined all imaginary cyclic quartic fields with class number \(1\). For further contributions in this area, we refer the reader to~\cite{AB66, AB67, B67, B71, GSS, KH89, MR23, MR25, HS67, HS75, KU72}.

\subsection{Main Results}

Our principal results are as follows.

\begin{thm}\label{T4}
There are exactly $408$ imaginary bicyclic biquadratic number fields with class number $4$. Furthermore, among these, only $296$ fields have cyclic class groups.
\end{thm}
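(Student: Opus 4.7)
The plan is to exploit Kuroda's class number formula for bicyclic biquadratic fields. For an imaginary bicyclic biquadratic $K$, exactly two of its three quadratic subfields $k_1, k_2, k_3$ are imaginary (say $k_1, k_2$) and one is real ($k_3$); the formula reads
$$h(K) = \frac{Q(K)\, h(k_1)\, h(k_2)\, h(k_3)}{2},$$
where $Q(K) \in \{1,2\}$ is the Hasse unit index $[O_K^{\times} : W_K\, O_{k_1}^{\times} O_{k_2}^{\times} O_{k_3}^{\times}]$. Setting $h(K) = 4$ forces $Q(K)\, h(k_1)\, h(k_2)\, h(k_3) = 8$, so in particular $h(k_1), h(k_2) \mid 8$. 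Thus the two imaginary subfields of each $K$ under consideration are drawn from the classical finite list of imaginary quadratic fields of class number at most $8 = 2n$, exactly as advertised in the introduction.

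The enumeration then proceeds as follows. I would iterate over all unordered pairs $\{k_1, k_2\}$ of distinct imaginary quadratic fields with $h(k_i) \mid 8$. Each such pair determines a unique biquadratic compositum $K = k_1 k_2$ together with a uniquely determined real quadratic subfield $k_3$ (the squarefree part of the product of the radicands of $k_1, k_2$). For each candidate $K$ I would look up $h(k_3)$ from standard tables of real quadratic class numbers, compute $Q(K)$ via the well-known criterion involving the norm of the fundamental unit of $k_3$ and the ramification behaviour of the prime $2$, and keep only those $K$ satisfying $Q(K)\, h(k_1)\, h(k_2)\, h(k_3) = 8$. Counting the surviving fields should yield exactly $408$.

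For the cyclic/non-cyclic split, since $|\mathrm{Cl}(K)| = 4$, the group is either $\mathbb{Z}/4\mathbb{Z}$ or $(\mathbb{Z}/2\mathbb{Z})^2$, and these are distinguished by the $2$-rank of $\mathrm{Cl}(K)$. This $2$-rank can be extracted from genus theory applied to $K/\mathbb{Q}$, using the ramified primes of $K/\mathbb{Q}$ together with the $2$-class group structures of the three quadratic subfields. The total count of fields with $2$-rank $1$ should be $296$, leaving $112$ with class group $(\mathbb{Z}/2\mathbb{Z})^2$. The main obstacle is the accurate computation of $Q(K)$ and of the full class group structure (rather than merely its order) across so many cases: the index $Q(K)$ depends on whether the fundamental unit of $k_3$ has norm $-1$ or $+1$ together with finer conditions, and precisely distinguishing cyclic from non-cyclic $2$-class groups sometimes requires invariants beyond those visible to genus theory alone (for instance capitulation kernels in the subfield tower). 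A careful, systematic, ideally computer-assisted case analysis is needed to complete the argument.
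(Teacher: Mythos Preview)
Your outline rests on the same foundation as the paper --- Kuroda's formula $h_K = q(K)\,h_{K_1}h_{K_2}h_{K_3}/2$ with $q(K)\in\{1,2\}$, together with the finite known lists of imaginary quadratic fields of class number dividing $8$ --- and would in principle succeed. The execution, however, diverges from what the paper actually does in two respects worth noting.

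First, the paper does not enumerate over unordered pairs $\{k_1,k_2\}$ of imaginary quadratic fields. Instead it organizes the search by the number of prime factors of $d_{xy}$ (the radicand of the real subfield $K_3$): genus theory (their Corollaries~2.3 and~2.4) shows $d_{xy}$ can have at most three prime factors when $h_K=4$, and a case-by-case analysis of the congruence classes of those primes pins down, for each structural shape of $K$, exactly which tuples $[h_{K_1},h_{K_2},h_{K_3},q(K)]$ are admissible (Propositions~3.2--3.4). This gives a much shorter candidate list than the roughly $\binom{212}{2}$ pairs your scheme would loop over.

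Second, and more importantly, the paper never computes $q(K)$ via unit-norm criteria, nor does it attempt to read off the $2$-rank of $\mathrm{Cl}(K)$ from genus theory. It simply feeds each candidate $K$ to \texttt{SageMath} and asks for $h_K$ and the full group $\mathrm{Cl}(K)$ directly; they say explicitly ``we have not used the value of $q(K)$ at all.'' So the obstacle you flag --- that determining $q(K)$ and distinguishing $\mathbb{Z}/4\mathbb{Z}$ from $(\mathbb{Z}/2\mathbb{Z})^2$ by hand is delicate --- is real, but the paper sidesteps it entirely by direct machine computation. Your route would work, but buys extra theoretical labor (unit-index criteria, capitulation) that the paper's pragmatic approach avoids; conversely, the paper's structural case analysis gives a cleaner theoretical picture of which shapes of $K$ can occur, at the cost of a longer casework preamble.
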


\begin{thm}\label{T6}
There are exactly $552$ imaginary bicyclic biquadratic number fields with class number $6$. 
\end{thm}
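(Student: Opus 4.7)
The plan is to execute the strategy advertised in the introduction: use Kuroda's class number formula to rewrite $h(K)=6$ as an arithmetic relation among the class numbers of the three quadratic subfields of $K$, and convert this into a finite search indexed by known imaginary quadratic fields of small class number. Every imaginary bicyclic biquadratic field $K$ contains exactly two imaginary quadratic subfields $k_1, k_2$ and one real quadratic subfield $k_3$, with $K = k_1 k_2$ and $k_3$ determined by the unordered pair $\{k_1, k_2\}$. Since $K$ has signature $(0,2)$, Kuroda's formula reads
$$h(K) \;=\; \frac{Q(K)}{2}\, h(k_1)\, h(k_2)\, h(k_3),$$
where the unit index $Q(K) = [E_K : W_K E_{k_1} E_{k_2} E_{k_3}]$ lies in $\{1,2\}$ in this totally complex setting. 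Imposing $h(K)=6$ forces $Q(K)\, h(k_1)\, h(k_2)\, h(k_3) = 12$, so each $h(k_i) \mid 12$, and hence $k_1, k_2$ must lie among the imaginary quadratic fields with class number in $\{1,2,3,4,6,12\}$, all of which are contained in the tables with $h \le 14$ cited in the introduction.

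\textbf{Finite search.} I would enumerate unordered pairs $\{k_1, k_2\}$ from this finite list, form $K = k_1 k_2$, extract the real subfield $k_3$ (up to squarefree part), and read off $h(k_3)$ from standard real-quadratic tables; candidates with $h(k_3) \nmid 12$ are discarded. Because $k_3$ is determined by $\{k_1,k_2\}$, each imaginary bicyclic biquadratic $K$ arises exactly once in this enumeration, avoiding double-counting. The surviving candidates split into two subcases: either $h(k_1)h(k_2)h(k_3) = 12$ (which requires $Q(K) = 1$), or $h(k_1)h(k_2)h(k_3) = 6$ (which requires $Q(K) = 2$).

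\textbf{The main obstacle: the unit index.} For each surviving candidate $K$ I must decide whether $Q(K) = 1$ or $2$; equivalently, whether some twist of the fundamental unit $\varepsilon_{k_3}$ of the real subfield by a root of unity becomes a square inside $E_K$. This is the delicate technical step. I would apply the classical criteria of Kubota and Kuroda (as systematized by Lemmermeyer), which express $Q(K)$ in terms of Hilbert symbols at the ramified primes together with the sign and parity of $\varepsilon_{k_3}$, reducing the determination of $Q(K)$ to a finite local computation at the primes dividing $\mathrm{disc}(K)$. After applying Kuroda's equality to each candidate and tabulating the outcomes, the count should emerge as exactly $552$. A PARI/GP or SageMath run provides an independent cross-check; because $6 = 2 \cdot 3$ admits only the cyclic abelian group of order $6$, no separate class-group-structure statement is needed here.
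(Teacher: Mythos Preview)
Your approach is essentially the paper's: both use Kuroda's formula to reduce $h_K=6$ to $q(K)\,h_{K_1}h_{K_2}h_{K_3}=12$ with $q(K)\in\{1,2\}$, then run a finite search over the known imaginary quadratic fields of class number dividing $12$. The paper differs only tactically: it inserts a genus-theory pre-filter (Propositions~\ref{P41} and~\ref{P42}) organizing candidates by the prime factorization of $d_{xy}$ into small parametric tables, and it bypasses your Hilbert-symbol determination of $Q(K)$ entirely by computing $h_K$ directly in \texttt{SageMath} for each candidate and checking whether it equals $6$.
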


\begin{thm}\label{T7}
There are exactly $277$ imaginary bicyclic biquadratic number fields with class number $7$.
\end{thm}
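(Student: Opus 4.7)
\emph{Proof proposal.} The plan is to adapt the strategy used by Jung et al.~\cite{SS} for class number $3$ and by Basilla et al.~\cite{BF24} for class number $5$, replacing the target class number by $7$. Let $K$ be an imaginary bicyclic biquadratic field with three quadratic subfields $k_1,k_2,k_3$, of which $k_1, k_2$ are imaginary and $k_3$ is real. Kuroda's class number formula in this setting reads
$$h(K) \;=\; \frac{Q(K)}{2}\, h(k_1)\, h(k_2)\, h(k_3),$$
where $Q(K)\in\{1,2\}$ is the Hasse unit index. In particular $h(K)=7$ forces
$$h(k_1)\, h(k_2)\, h(k_3) \;=\; \begin{cases} 14 & \text{if } Q(K)=1, \\ 7 & \text{if } Q(K)=2. \end{cases}$$

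First, I would invoke the Conner--Hurrelbrink analysis from~\cite{PJ} of the $2$-part of $\operatorname{Cl}(K)$: since $h(K)=7$ is odd, the $2$-Sylow of $\operatorname{Cl}(K)$ is trivial, and their criteria impose explicit constraints on the factorization of the discriminants of $k_1, k_2$ and on the value of $Q(K)$. Combined with the product constraint above, the unordered multiset $\{h(k_1), h(k_2), h(k_3)\}$ is then drawn from a very short list: essentially $\{1,1,7\}$ in the $Q(K)=2$ branch, and $\{1,2,7\}$ or $\{1,1,14\}$ in the $Q(K)=1$ branch, together with the information of which $k_i$ can play the role of the real subfield.

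Next, using the complete lists of imaginary quadratic fields with class number dividing $14$, I would enumerate every unordered pair $(k_1,k_2)$ of imaginary quadratic fields realizing an admissible shape. Each such pair determines the real subfield $k_3 \subset K = k_1 k_2$, whose class number is then read off from the tables of real quadratic fields. For each surviving candidate, I would compute $Q(K)$ from the Conner--Hurrelbrink criteria and substitute back into Kuroda's formula to verify $h(K) = 7$. The resulting fields are catalogued and counted, and for each the class group structure of $K$ and of its subfields is recorded.

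The main obstacle will be the uniform bookkeeping for the unit index $Q(K)$, since its value is precisely what distinguishes the branches $h(k_1)h(k_2)h(k_3) = 14$ and $h(k_1)h(k_2)h(k_3) = 7$, and its determination requires analyzing the fundamental unit of the real subfield $k_3$ together with the norm map on the units of $K$. Once this has been handled systematically, as in~\cite{SS,BF24}, the remainder of the argument is a finite and mechanical enumeration, and I expect the final tally to stabilize at exactly $277$ fields.
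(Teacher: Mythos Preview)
Your proposal is correct and follows essentially the same route as the paper: invoke the Conner--Hurrelbrink classification (Proposition~\ref{7.1}) to pin down the finitely many admissible shapes of $K$ with odd class number, use Kuroda's formula (Theorem~\ref{2.1}) to list the possible tuples $[h_{K_1},h_{K_2},h_{K_3},q(K)]$, and then enumerate candidates against the known tables of imaginary quadratic fields with class number dividing $14$. The one place where the paper is slightly more economical is precisely the obstacle you flag: rather than determining $q(K)$ a priori from unit-group computations, the paper simply lists all candidate pairs $(x,y)$ consistent with the subfield class numbers and then verifies $h_K=7$ directly in \texttt{SageMath}, so $q(K)$ never has to be computed on its own (cf.\ the remark in the proof of Theorem~\ref{T4}).
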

We remark that if $K =\Q(\sqrt{-x}, \sqrt{-y})$ is an imaginary bicyclic biquadratic number field,  where $x$ and $y$ are square-free positive integers, then there exists a square-free positive integer $d$ such that
\[
K = \mathbb{Q}(\sqrt{-x}, \sqrt{d}) = \mathbb{Q}(\sqrt{-y}, \sqrt{d}),
\]
and conversely, any such representation determines the same field. Therefore, when listing number fields \(K\) with a given class number, it suffices to consider only one canonical form. In this article, we choose to represent \(K\) as 
\[
K = \mathbb{Q}(\sqrt{-x}, \sqrt{-y}),
\] 
while noting that our method applies equally to any equivalent form.
\\

The following notations will be used throughout this article. Let \( x \) and \( y \) denote square-free positive integers, and let \( d_{xy} \) denote the square-free part of the product \( xy \). We consider imaginary bicyclic biquadratic number fields of the form
\[
K = \mathbb{Q}(\sqrt{-x}, \sqrt{-y}),
\]
whose three distinct quadratic subfields are given by
\[
K_1 = \mathbb{Q}(\sqrt{-x}), \quad K_2 = \mathbb{Q}(\sqrt{-y}), \quad \text{and} \quad K_3 = \mathbb{Q}(\sqrt{d_{xy}}),
\]
where \( K_3 \) is the unique real quadratic subfield of \( K \).

Throughout, $p_1, p_2, p_3 \equiv 1 \pmod{4}$ and $q_1, q_2, q_3 \equiv 3 \pmod{4}$ will represent distinct primes as needed in subsequent constructions. For any number field $L$, let $\mathcal{O}_L$ denote its ring of integers, $C\ell(L)$ its class group, $h_L$ its class number, and $D_L$ its discriminant. 

\subsection{Organization of the Paper}

Including the introduction, this article is structured into ten sections. Section~2 is devoted to preliminaries. The proofs of Theorems~\ref{T4},~\ref{T6}, and~\ref{T7} are presented in Sections~3,~4, and~5, respectively. Section~6 offers concluding remarks. Sections~7,~8, and~9 provide complete lists of imaginary bicyclic biquadratic number fields with class numbers $4$, $6$, and $7$, respectively. The final section is an appendix that contains a table of imaginary quadratic number fields with small class numbers used in our computations.

\medskip


\section{Preliminaries}

In this section, we recall some results that form the backbone of our arguments throughout the paper. We begin with the following straightforward result.

\begin{prop}\label{P21}
Let $K$ be an imaginary bicyclic biquadratic field, and let $F$ be its real quadratic subfield. Then $h_F$ divides $h_K$.
\end{prop}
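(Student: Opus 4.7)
The plan is to invoke class field theory and work with the Hilbert class field of $F$. Specifically, I would let $H_F$ denote the (wide) Hilbert class field of $F$, i.e.\ the maximal abelian extension of $F$ that is unramified at every prime (finite and infinite). By class field theory, $[H_F:F] = h_F$, and since $F$ is totally real and $H_F/F$ is unramified at the infinite places, $H_F$ is totally real as well. The strategy is to compare $h_F$ with $h_K$ by placing both class numbers inside the Hilbert class field tower over $K$.

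Concretely, I would form the compositum $L := K \cdot H_F$ and then carry out the following steps in order. First, I would observe that $L/K$ is obtained by base change from the unramified abelian extension $H_F/F$, hence $L/K$ is abelian and unramified at every finite place of $K$; moreover, because $K$ is a CM field all of whose archimedean places are complex, there is no ramification to worry about at infinity either. Consequently $L$ is contained in the Hilbert class field $H_K$ of $K$, which yields the key divisibility $[L:K] \mid [H_K:K] = h_K$. Second, I would compute $[L:K]$ via the standard compositum identity
\[
[L:K] = [K H_F : K] = [H_F : H_F \cap K].
\]
Third, I would argue that $H_F \cap K = F$: the intersection is a subfield of $K$ containing $F$, and since $[K:F]=2$, it is either $F$ or $K$; but $H_F$ is totally real while $K$ is not, so $K \not\subseteq H_F$, ruling out the second possibility. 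Combining these three steps gives $[L:K] = h_F$, and hence $h_F \mid h_K$.

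The main obstacle, such as it is, is really just a bookkeeping point about conventions: one must use the wide Hilbert class field (unramified at all places including the archimedean ones) so that $H_F$ is forced to be totally real and the intersection argument goes through cleanly. With the narrow Hilbert class field one would only get divisibility of $h_K$ by $h_F^{+}$, which could be twice as big. Everything else in the proof, namely the preservation of unramifiedness under base change and the tower law, is standard.
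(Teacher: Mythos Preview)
Your proof is correct and is the standard class-field-theoretic argument for the well-known fact that the class number of the maximal totally real subfield of a CM field divides the class number of the CM field. The paper does not actually supply a proof of Proposition~\ref{P21}; it simply labels the result ``straightforward'' and states it without justification, presumably because it is classical. So there is nothing in the paper to compare your argument against, but what you have written is exactly the proof one would expect, and every step (base change preserves unramifiedness, the compositum degree formula via Galois theory, and the totally-real/CM dichotomy forcing $H_F \cap K = F$) is sound.

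One small quibble with your closing remark: if one tried to run the argument with the narrow Hilbert class field $H_F^{+}$, the issue is not that one would conclude $h_F^{+} \mid h_K$ (which would be a stronger statement, not a weaker one), but rather that the intersection step could break down entirely, since $H_F^{+}$ need not be totally real and hence $H_F^{+} \cap K$ might equal $K$. Your instinct to insist on the wide Hilbert class field is correct; only the phrasing of what goes wrong otherwise is slightly off.
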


The next result, taken from \cite{FL94}, plays a crucial role in our proofs.

\begin{thm}\label{2.1}
Let \( K \) be an imaginary bicyclic biquadratic field with quadratic subfields \( K_1 \), \( K_2 \), and \( K_3 \). Then
\[
h_K = \frac{q(K)\,h_{K_1}\,h_{K_2}\,h_{K_3}}{2},
\]
where $q(K) = [E_K : E_{K_1}E_{K_2}E_{K_3}]$, with $E_K$ and $E_{K_i}$ denoting the unit groups of $\mathcal{O}_K$ and $\mathcal{O}_{K_i}$, respectively. Furthermore,
\[
h_K \geq \frac{h_{K_1} h_{K_2} h_{K_3}}{2},
\]
since \( q(K) \in \{1, 2\} \).
\end{thm}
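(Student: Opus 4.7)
The plan is to derive the formula from the analytic class number formula together with a regulator comparison. The key input is the Artin factorization of the Dedekind zeta function: since $\op{Gal}(K/\Q) \cong V_4$ has three nontrivial characters $\chi_1,\chi_2,\chi_3$, each cutting out one of the quadratic subfields $K_i$, one has
$$
\zeta_K(s)\,\zeta_\Q(s)^2 \;=\; \zeta_{K_1}(s)\,\zeta_{K_2}(s)\,\zeta_{K_3}(s).
$$
I would compare residues at $s=1$, apply the analytic class number formula on both sides, and use the conductor-discriminant relation $|D_K| = |D_{K_1}|\cdot|D_{K_2}|\cdot D_{K_3}$ (which holds because $K/\Q$ is abelian and the three quadratic characters are precisely the nontrivial ones). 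After the expected cancellations of $\pi$-powers this produces
$$
\frac{h_K R_K}{w_K} \;=\; \frac{2\,h_{K_1}h_{K_2}h_{K_3}R_{K_3}}{w_{K_1}w_{K_2}}.
$$

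Next, since $K$ is a CM field with maximal totally real subfield $K_3$, the unit groups $E_K$ and $E_{K_3}$ have the same free rank (namely $1$), and a direct computation with the $\log$-embedding matrix (noting that the two archimedean places of $K_3$ are real while the two above them in $K$ are complex) gives $R_K = (2/Q)R_{K_3}$, where $Q := [E_K : \mu(K)\,E_{K_3}]$ is the Hasse unit index. Substituting yields
$$
h_K \;=\; \frac{Q\,w_K}{w_{K_1}w_{K_2}}\,h_{K_1}h_{K_2}h_{K_3}.
$$

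The final algebraic step is to match this with $q(K) = [E_K : E_{K_1}E_{K_2}E_{K_3}]$. Since $K_1, K_2$ are imaginary quadratic, $E_{K_i} = \mu(K_i)$, so $E_{K_1}E_{K_2}E_{K_3} = \mu(K_1)\mu(K_2)\,E_{K_3}$. Because distinct imaginary quadratic fields share only $\{\pm 1\}$ as common roots of unity, $|\mu(K_1)\mu(K_2)| = w_{K_1}w_{K_2}/2$, and a short index computation gives
$$
q(K) \;=\; Q \cdot [\mu(K) : \mu(K_1)\mu(K_2)] \;=\; \frac{2Q\,w_K}{w_{K_1}w_{K_2}},
$$
which recovers the stated formula $h_K = q(K)h_{K_1}h_{K_2}h_{K_3}/2$. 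To see $q(K)\in\{1,2\}$, I would invoke Hasse's theorem ($Q\in\{1,2\}$) together with a brief enumeration of how $\mu(K)$ can strictly exceed $\mu(K_1)\mu(K_2)$: among imaginary bicyclic biquadratic fields this can only occur for $K = \Q(\zeta_8)$, and in that exceptional case $Q=1$ since $\Q(\zeta_8)$ has prime-power conductor, so $q(K)=2$.

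The main obstacle will be the careful bookkeeping of the various root-of-unity contributions $w_K, w_{K_1}, w_{K_2}$ and the Hasse index $Q$, particularly in the exceptional cyclotomic cases $K \in \{\Q(\zeta_8), \Q(\zeta_{12})\}$; the clean proof of the upper bound $q(K)\leq 2$ hinges on verifying that the analytic constant $2Qw_K/(w_{K_1}w_{K_2})$ really coincides with the algebraic index $[E_K:E_{K_1}E_{K_2}E_{K_3}]$ in each of these subcases.
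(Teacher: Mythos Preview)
The paper does not prove this theorem at all: it is simply quoted from Lemmermeyer's article on Kuroda's class number formula (reference \cite{FL94} in the paper), with no argument supplied. So there is no ``paper's own proof'' to compare against.

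That said, your proposal is a genuine and essentially correct derivation via the Brauer--Kuroda route: factor $\zeta_K$ through the characters of $V_4$, compare residues using the analytic class number formula and the conductor--discriminant relation, convert the resulting regulator ratio $R_K/R_{K_3}$ into the Hasse unit index $Q$, and then verify by a short case check on $\mu(K)$ that the analytic constant $2Qw_K/(w_{K_1}w_{K_2})$ equals the algebraic index $q(K)$ and lies in $\{1,2\}$. The bookkeeping you outline is right --- the residue computation gives exactly $h_KR_K/w_K = 2h_{K_1}h_{K_2}h_{K_3}R_{K_3}/(w_{K_1}w_{K_2})$, and the index identity $q(K)=Q\cdot[\mu(K):\mu(K_1)\mu(K_2)]$ follows from $\mu(K)\cap E_{K_3}=\{\pm 1\}$. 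Your handling of the exceptional fields is also accurate: among imaginary bicyclic biquadratics only $K=\Q(\zeta_8)$ has $[\mu(K):\mu(K_1)\mu(K_2)]=2$, and there $Q=1$ by the prime-power-conductor criterion, so $q(K)=2$; in every other case the factor is $1$ and $q(K)=Q\in\{1,2\}$ directly by Hasse.

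For comparison, Lemmermeyer's cited paper proceeds algebraically (via norm maps on ideal classes and an analysis of ambiguous classes) rather than through $L$-functions. Your analytic argument is a legitimate alternative and arguably more transparent in the biquadratic case, since the Artin factorization over $V_4$ is so explicit; the cost is the somewhat delicate root-of-unity bookkeeping you already flagged, which the algebraic approach avoids.
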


From Gauss's genus theory, we record the following classical result on quadratic fields.

\begin{thm}\label{2.2}
Let $F$ be a quadratic field with discriminant $D_F$. Let $t$ denote the number of distinct prime divisors of $D_F$, and let $r_2(F)$ denote the $2$-rank of $C\ell(F)$. Then
\[
r_2(F) =
\begin{cases}
t - 2, & \text{if $F$ is real and some prime $p \equiv 3 \pmod{4}$ divides $D_F$,} \\
t - 1, & \text{otherwise.}
\end{cases}
\]
\end{thm}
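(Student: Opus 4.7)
The plan is to invoke the theory of the genus field and reduce the computation of $r_2(F)$ to a degree count. I will define the \emph{genus field} $F^\ast$ of $F$ to be the maximal subfield of the Hilbert class field $H_F$ that is abelian over $\Q$. By class field theory, $\text{Gal}(H_F/F) \cong C\ell(F)$. The non-trivial element $\sigma \in \text{Gal}(F/\Q)$ acts on $C\ell(F)$ by inversion, because for any nonzero ideal $I$ of $\mathcal{O}_F$ one has $I \cdot \sigma(I) = (N(I))\mathcal{O}_F$, a principal ideal. A short commutator computation in $\text{Gal}(H_F/\Q)$ then identifies its commutator subgroup with $C\ell(F)^2$, so $\text{Gal}(F^\ast/F) \cong C\ell(F)/C\ell(F)^2$, a group of order $2^{r_2(F)}$. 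Hence it suffices to determine $[F^\ast:F]$.

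Next, I would build $F^\ast$ explicitly. Factor $D_F$ uniquely as a product of prime discriminants $D_F = d_1^\ast \cdots d_t^\ast$, where each $d_i^\ast \in \{-4, 8, -8, p, -p\}$ ($p$ an odd prime, signed according as $p \equiv 1$ or $3 \pmod 4$). Set $G := \Q(\sqrt{d_1^\ast}, \ldots, \sqrt{d_t^\ast})$, so $F \subseteq G$ and $[G:F] = 2^{t-1}$. The extension $G/F$ is unramified at all finite primes: for any rational prime $\ell \mid D_F$, exactly one $d_i^\ast$ is divisible by $\ell$, so $\ell$ has ramification index $2$ in $G/\Q$; since $\ell$ also has ramification index $2$ in $F/\Q$, multiplicativity of ramification indices forces $G/F$ to be unramified above $\ell$.

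To pin down $F^\ast$ from $G$, I would analyze ramification at the infinite places. If $F$ is imaginary, $F$ has no real places, so $G/F$ is unramified at infinity and $F^\ast = G$, yielding $r_2(F) = t-1$. If $F$ is real and no $p \equiv 3 \pmod 4$ divides $D_F$, then every $d_i^\ast$ is positive, $G$ is totally real, and again $F^\ast = G$ with $r_2(F) = t-1$. In the remaining case ($F$ real with some $p \equiv 3 \pmod 4$ dividing $D_F$), at least one $d_i^\ast$ is negative, so $G$ is not totally real, and $F^\ast$ is the unique index-$2$ totally real subfield of $G$ containing $F$; this gives $r_2(F) = t-2$. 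The main obstacle will be proving \emph{maximality}, namely that any abelian extension $M/\Q$ containing $F$ with $M/F$ unramified at all places must lie in $F^\ast$ as constructed. This is handled via Kronecker--Weber, embedding $M \subseteq \Q(\zeta_n)$, combined with the conductor-discriminant formula to force the conductor of $M$ to be supported on primes dividing $D_F$; a careful analysis of the corresponding subgroup of $(\Z/n\Z)^\times$ then constrains $M$ sharply enough to force its inclusion in $G$ (or its totally real subfield in the real case).
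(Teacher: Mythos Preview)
The paper does not prove Theorem~\ref{2.2}; it simply records it as a classical consequence of Gauss's genus theory and uses it as a black box. Your proposal, by contrast, supplies the standard modern proof via the genus field, and the outline is correct.

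A few remarks on execution. Your commutator computation identifying $\operatorname{Gal}(H_F/F^\ast)$ with $C\ell(F)^2$ is fine. For the maximality step, Kronecker--Weber plus conductor-discriminant will work but is heavier than necessary. A cleaner route: since $\Q$ has no nontrivial unramified extensions, $\operatorname{Gal}(F^\ast/\Q)$ is generated by its inertia subgroups; each inertia group at a prime $p\mid D_F$ has order exactly $2$ (it surjects onto the inertia in $F/\Q$, and $F^\ast/F$ is unramified), so $\operatorname{Gal}(F^\ast/\Q)$ is elementary abelian of exponent $2$. Hence $F^\ast$ is a compositum of quadratic fields, each unramified outside $D_F$, forcing $F^\ast\subseteq G$. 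In the real case with some $d_i^\ast<0$, note that $H_F$ (and hence $F^\ast$) is totally real because $H_F/F$ is unramified at the archimedean places; this pins $F^\ast$ down as the index-$2$ totally real subfield $G^+$ of $G$, since complex conjugation is a single nontrivial involution in $\operatorname{Gal}(G/\Q)$.
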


The following corollary is an easy consequence of Theorem~\ref{2.2}.

\begin{cor}\label{C24}
Let $F = \Q(\sqrt{-z})$ be an imaginary quadratic field, where $z$ is a square-free positive integer. Then for any non-negative integer $s$, we have $2^s \mid h_F$ if one of the following holds:
\begin{itemize}
    \item $z$ has at least $s+1$ distinct prime divisors;
    \item $z$ has exactly $s$ distinct prime divisors and $z \equiv 1 \pmod{4}$.
\end{itemize}
\end{cor}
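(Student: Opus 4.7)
The plan is to derive the divisibility $2^s \mid h_F$ directly from Theorem~\ref{2.2}, which gives $r_2(F) = t-1$ for an imaginary quadratic field, where $t$ is the number of distinct primes dividing the discriminant $D_F$. Since $2^{r_2(F)} \mid h_F$, it suffices to show that in each scenario listed in the corollary we have $t - 1 \geq s$, equivalently $t \geq s+1$.

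The first step is to pin down the discriminant of $F = \Q(\sqrt{-z})$ in terms of $z \bmod 4$. Using the standard formula, $D_F = -z$ when $z \equiv 3 \pmod 4$, and $D_F = -4z$ when $z \equiv 1 \pmod 4$ or $z \equiv 2 \pmod 4$. From this I would compute $t$ in terms of $\omega(z)$, the number of distinct prime divisors of $z$: namely $t = \omega(z)$ if $z \equiv 3 \pmod 4$ or $z \equiv 2 \pmod 4$ (in the latter case $2$ already divides $z$), and $t = \omega(z) + 1$ if $z \equiv 1 \pmod 4$ (since $2$ is a new prime divisor of $D_F = -4z$).

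Next I would check the two bullet points. For the first bullet, suppose $\omega(z) \geq s+1$. Regardless of the residue of $z$ modulo $4$, the above shows $t \geq \omega(z) \geq s+1$, so $r_2(F) \geq s$ and hence $2^s \mid h_F$. For the second bullet, suppose $\omega(z) = s$ and $z \equiv 1 \pmod 4$. Then $t = \omega(z) + 1 = s+1$, so again $r_2(F) = s$ and $2^s \mid h_F$.

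There is no real obstacle here: the proof is a routine bookkeeping exercise matching the two hypotheses to a lower bound on $t$, with the only subtlety being the $\omega(z)$ vs.\ $\omega(z)+1$ distinction according to whether $2$ ramifies as a new prime. Once the discriminant formula is invoked, the corollary falls out immediately from Theorem~\ref{2.2}.
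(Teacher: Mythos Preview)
Your proposal is correct and follows exactly the approach the paper intends: the paper states that Corollary~\ref{C24} is ``an easy consequence of Theorem~\ref{2.2}'' without giving further details, and your computation of $t$ in terms of $\omega(z)$ via the discriminant formula for $D_F$ is precisely the bookkeeping needed to unpack that consequence.
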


Analogously to Corollary~\ref{C24}, we obtain the following result from Theorem~\ref{2.2} for real quadratic fields.
\begin{cor}\label{C25}
Let $F = \Q(\sqrt{z})$ be a real quadratic number field, where $z$ is a square-free positive integer. Then, for any non-negative integer $s$, we have $2^s \mid h_F$ if one of the following conditions holds:
\begin{itemize}
    \item $z$ has at least $s+2$ distinct prime divisors;
    \item $z$ has exactly $s+1$ distinct prime divisors and $z \equiv 3 \pmod{4}$;
    \item $z$ has exactly $s+1$ distinct prime divisors and none of its prime divisors is congruent to $3 \pmod{4}$.
\end{itemize}
\end{cor}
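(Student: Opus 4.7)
The plan is to derive Corollary~\ref{C25} directly from Gauss's genus formula of Theorem~\ref{2.2}, by translating the three hypotheses on the prime divisors of $z$ into statements about the number of prime divisors of the discriminant $D_F$ and the existence of a prime $\equiv 3 \pmod{4}$ dividing it.

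First I would set up the bookkeeping for $D_F$ in terms of $z$: since $z$ is square-free and positive, $D_F = z$ when $z \equiv 1 \pmod{4}$, and $D_F = 4z$ when $z \equiv 2, 3 \pmod{4}$. Let $t_z$ denote the number of distinct prime divisors of $z$ and $t$ the number of distinct prime divisors of $D_F$. Then $t = t_z$ when $z \equiv 1 \pmod 4$ or $z \equiv 2 \pmod 4$ (in the latter case because $2$ already divides $z$), whereas $t = t_z + 1$ when $z \equiv 3 \pmod 4$, since $z$ is odd and $D_F = 4z$ introduces the prime $2$.

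Next I would apply Theorem~\ref{2.2} hypothesis by hypothesis. For the first hypothesis, $t_z \geq s + 2$ forces $t \geq s + 2$, and the bound $r_2(F) \geq t - 2$ (which holds in both cases of Theorem~\ref{2.2}) gives $r_2(F) \geq s$. For the second hypothesis, I would invoke the elementary observation that a square-free odd positive integer $\equiv 3 \pmod{4}$ must have at least one prime factor $\equiv 3 \pmod{4}$ (otherwise the product of its prime factors would be $\equiv 1 \pmod 4$); combined with $t = t_z + 1 = s + 2$, Theorem~\ref{2.2} yields $r_2(F) = t - 2 = s$. For the third hypothesis, the assumption that no prime divisor of $z$ is $\equiv 3 \pmod{4}$ forces the product of the odd prime divisors of $z$ to be $\equiv 1 \pmod 4$, hence $z \equiv 1$ or $2 \pmod{4}$ depending on the parity of $z$. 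In both subcases $t = t_z = s + 1$ and no prime of $D_F$ is $\equiv 3 \pmod{4}$ (the prime $2$ is excluded, and the odd primes of $D_F$ are exactly the odd primes of $z$), so Theorem~\ref{2.2} gives $r_2(F) = t - 1 = s$.

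Finally, from $r_2(F) \geq s$ one concludes $2^s \mid h_F$, since the $2$-part of $C\ell(F)$ is an abelian $2$-group of rank at least $s$, hence has order divisible by $2^s$. I do not anticipate any real obstacle: the only delicate point is the prime-$2$ bookkeeping for $D_F$ in the case $z \equiv 3 \pmod 4$, and the parity observation used for the second hypothesis, both of which are completely elementary.
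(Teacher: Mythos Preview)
Your proposal is correct and follows exactly the approach the paper indicates: the paper does not spell out a proof of Corollary~\ref{C25} but simply states it is obtained from Theorem~\ref{2.2} analogously to Corollary~\ref{C24}, and your careful case analysis of $D_F$ and $t$ is precisely how that derivation goes through.
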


\section{Proof of Theorem \ref{T4}}
We now turn to the proof of Theorem~\ref{T4}. Our goal is to determine all imaginary bicyclic biquadratic fields $K = \mathbb{Q}(\sqrt{-x},\, \sqrt{-y})$ with class number $h_K = 4$. Following the notation established in Section~1, we begin this section by proving the following result.
\begin{prop} \label{P31} Let \( K = \mathbb{Q}(\sqrt{-x}, \sqrt{-y}) \) be an imaginary bicyclic biquadratic field. If \( d_{xy} \) has at least four distinct prime factors, then \( h_K \neq 4 \).
\end{prop}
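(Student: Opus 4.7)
The plan is to bound $h_K$ from below by combining Theorem~\ref{2.1} with the genus-theoretic $2$-divisibility statements in Corollaries~\ref{C24} and~\ref{C25}. More precisely, I will show that when $d_{xy}$ has at least four distinct prime factors, the product $h_{K_1} h_{K_2} h_{K_3}$ is divisible by $16$, so that the formula $2h_K = q(K) h_{K_1} h_{K_2} h_{K_3}$ forces $8 \mid h_K$ and hence $h_K \ne 4$.

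First I would set up the following bookkeeping. Write $a = \omega(x)$, $b = \omega(y)$, $c = \omega(d_{xy})$, and let $k$ denote the number of primes dividing $\gcd(x,y)$. Since $x$ and $y$ are squarefree positive integers and $d_{xy}$ is the squarefree part of $xy$, a prime $p$ divides $d_{xy}$ if and only if $p$ divides exactly one of $x, y$. Consequently, $a + b = c + 2k$, and therefore
\[
a + b + c \;=\; 2c + 2k \;\geq\; 2c \;\geq\; 8
\]
by hypothesis.

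Next, assuming the generic situation $a,b\ge 1$, I would apply Corollary~\ref{C24} to $K_1 = \mathbb{Q}(\sqrt{-x})$ and $K_2 = \mathbb{Q}(\sqrt{-y})$ to get $2^{a-1} \mid h_{K_1}$ and $2^{b-1} \mid h_{K_2}$, and Corollary~\ref{C25} to $K_3 = \mathbb{Q}(\sqrt{d_{xy}})$ to get $2^{c-2} \mid h_{K_3}$. Multiplying,
\[
2^{a+b+c-4} \,\bigm|\, h_{K_1} h_{K_2} h_{K_3},
\]
and $a+b+c-4 \geq 4$, so $16 \mid h_{K_1} h_{K_2} h_{K_3}$. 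Using $q(K) \in \{1,2\}$ from Theorem~\ref{2.1}, the identity $2h_K = q(K) h_{K_1} h_{K_2} h_{K_3}$ then yields $8 \mid h_K$, which excludes $h_K = 4$.

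The only subtlety I anticipate is the boundary case where $x = 1$ or $y = 1$, since Corollary~\ref{C24} gives nothing when $a = 0$ or $b = 0$. These cases, however, are actually stronger rather than weaker: if $x = 1$, then $k = 0$ and $b = c \geq 4$, so Corollary~\ref{C24} gives $2^{c-1} \mid h_{K_2}$ while Corollary~\ref{C25} still gives $2^{c-2} \mid h_{K_3}$, and the product $h_{K_1} h_{K_2} h_{K_3}$ is divisible by $2^{2c-3} \geq 32$. Thus the main (minor) obstacle is organizing a clean case split between the generic and degenerate situations; once that is done, the argument reduces to the elementary 2-rank inequality above.
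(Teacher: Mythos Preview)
Your proof is correct and in fact cleaner than the paper's. Both arguments rest on the same ingredients (Theorem~\ref{2.1} together with Corollaries~\ref{C24} and~\ref{C25}), but the organization differs. The paper first disposes of the case where $d_{xy}$ has at least five prime factors by invoking Proposition~\ref{P21} (so that $8\mid h_{K_3}\mid h_K$ directly), and then for $d_{xy}=pqrt$ performs a three-way case split according to how the four primes distribute between $x$ and $y$, checking in each case that $h_{K_1}h_{K_2}h_{K_3}\ge 16$. Your approach replaces this split by the single counting identity $\omega(x)+\omega(y)+\omega(d_{xy})=2\omega(d_{xy})+2k\ge 8$, which immediately gives $2^4\mid h_{K_1}h_{K_2}h_{K_3}$ and hence $8\mid h_K$. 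This is shorter, avoids the case analysis entirely, and makes the role of the prime count $c=\omega(d_{xy})$ transparent; the paper's version, on the other hand, makes explicit which subfield contributes which power of $2$ in each configuration, which may be pedagogically useful as a warm-up for the finer analyses in Propositions~\ref{P33}--\ref{P36}.
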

\begin{proof} Let \( K_1 = \mathbb{Q}(\sqrt{-x}) \), \( K_2 = \mathbb{Q}(\sqrt{-y}) \), and \( K_3 = \mathbb{Q}(\sqrt{d_{xy}}) \). By Corollary~\ref{C25}, if \( d_{xy} \) has at least five distinct prime factors, then \( 8 \mid h_{K_3} \). Since \( K_3 \) is a real quadratic field, Proposition~\ref{P21} implies \( 8 \mid h_K \), and thus \( h_K \neq 4 \).

Now suppose \( d_{xy} = pqrt \), where \( p, q, r, t \) are distinct primes. Then by Corollary~\ref{C25}, \( 4 \mid h_{K_3} \). Without loss of generality, the possibilities for $(x, y)$ are as follows:

\noindent
\textbf{Case 1:} \( x = v \) and \( y = pqrtv \), where \( v \) is a square-free positive integer. \\
Then \( K_2 = \mathbb{Q}(\sqrt{-pqrtv}) \), which has at least four distinct prime divisors. Hence, by Corollary~\ref{C24}, \( 8 \mid h_{K_2} \), and since \( 4 \mid h_{K_3} \), Theorem~\ref{2.1} gives \( h_K > 4 \).

\noindent
\textbf{Case 2:} \( x = pv \) and \( y = qrtv \), with square-free positive integer \( v \). \\
Then \( K_2 = \mathbb{Q}(\sqrt{-qrtv}) \) has at least three distinct prime divisors, and hence \( 4 \mid h_{K_2} \). Again, using Theorem~\ref{2.1}, we conclude \( h_K > 4 \).

\noindent
\textbf{Case 3:} \( x = pqv \) and \( y = rtv \), where \( v \) is a square-free positive integer. \\
In this case, again using Corollary~\ref{C24}, \( 2 \mid h_{K_1} \) and \( 2 \mid h_{K_2} \). Therefore, from Theorem~\ref{2.1}, we have \( h_K \neq 4 \).

\vspace{0.2cm}
In all cases, we conclude that if \( d_{xy} \) has at least four distinct prime factors, then \( h_K \neq 4 \). This completes the proof.
\end{proof}
From Proposition~\ref{P31}, we deduce that if \( h_K = 4 \), then $d_{xy}$ has at most three prime factors. Since \( K_3 = \mathbb{Q}(\sqrt{d_{xy}}) \) is a real quadratic field, we must have \( d_{xy} \neq 1 \). We now analyze each possible case for \( d_{xy} \) individually.
We begin with the situation where \( d_{xy} \) is a prime number.

 \begin{prop} \label{P33} Let \( K = \mathbb{Q}(\sqrt{-x}, \sqrt{-y}) \) be an imaginary bicyclic biquadratic number field such that \( d_{xy} \) is a prime number. Then \( h_K = 4 \) if and only if \( K \) is one of the fields listed in the following table:
 \renewcommand{\arraystretch}{1.2}
 \begin{center}
$$
\begin{array}{|c|l|c|}
\hline
\# & \text{Field } K & [h_{K_1},\ h_{K_2},\ h_{K_3},\ q(K)] \\
\hline
1 & \mathbb{Q}(\sqrt{-p_1p_2q_1}, \sqrt{-p_2q_1}) & [4,\ 2,\ 1,\ 1]\\
\hline
2 & \mathbb{Q}(\sqrt{-2p_1p_2}, \sqrt{-2p_2}) & [4,\ 2,\ 1,\ 1]\\
\hline
3 & \mathbb{Q}(\sqrt{-2p_1q_1}, \sqrt{-2q_1}) & [4,\ 2,\ 1,\ 1]\\
\hline
4 & \mathbb{Q}(\sqrt{-p_1p_2}, \sqrt{-p_1}) & [4,\ 2,\ 1,\ 1]\\
\hline
5 & \mathbb{Q}(\sqrt{-p_1q_1}, \sqrt{-q_1}) & [4,\ 1,\ 1,\ 2],\  [8,\ 1,\ 1,\ 1]\\
\hline
6 & \mathbb{Q}(\sqrt{-2p_1}, \sqrt{-2}) & [4,\ 1,\ 1,\ 2],\  [8,\ 1,\ 1,\ 1]\\
\hline

7 & \mathbb{Q}(\sqrt{-p_1}, \sqrt{-1}) & [4,\ 1,\ 1,\ 2],\  [8,\ 1,\ 1,\ 1]\\
\hline
8 & \mathbb{Q}(\sqrt{-2p_1q_1}, \sqrt{-2p_1}) & [4,\ 2,\ 1,\ 1]\\
\hline
9 & \mathbb{Q}(\sqrt{-2q_1q_2}, \sqrt{-2q_2}) & [4,\ 2,\ 1,\ 1]\\
\hline

10 & \mathbb{Q}(\sqrt{-p_1q_1}, \sqrt{-p_1}) & [2,\ 2,\ 1,\ 2],\ [4,\ 2,\ 1,\ 1],\ [2,\ 4,\ 1,\ 1]\\
\hline
11 & \mathbb{Q}(\sqrt{-q_1q_2}, \sqrt{-q_1}) & [4,\ 1,\ 1,\ 2],\  [8,\ 1,\ 1,\ 1]\\
\hline
12 & \mathbb{Q}(\sqrt{-2q_1}, \sqrt{-2}) & [4,\ 1,\ 1,\ 2],\  [8,\ 1,\ 1,\ 1]\\
\hline

13 & \mathbb{Q}(\sqrt{-2p_1q_1}, \sqrt{-p_1q_1}) & [4,\ 2,\ 1,\ 1]\\
\hline
14 & \mathbb{Q}(\sqrt{-2p_1}, \sqrt{-p_1}) & [2,\ 2,\ 1,\ 2],\  [4,\ 2,\ 1,\ 1],\  [2,\ 4,\ 1,\ 1]\\
\hline
15 & \mathbb{Q}(\sqrt{-2q_1}, \sqrt{-q_1}) & [4,\ 1,\ 1,\ 2],\  [8,\ 1,\ 1,\ 1]\\
\hline

\end{array}
$$
\captionof{table}{}
\label{tab1}
\end{center}
\end{prop}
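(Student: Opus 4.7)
The plan is to combine the class number product formula (Theorem~\ref{2.1}) with the genus-theoretic corollaries to reduce the classification to a finite case analysis. First I would establish that $h_{K_3} = 1$: since $d_{xy}$ is a prime, a direct check via Theorem~\ref{2.2} in each of the three cases $d_{xy} = 2$, $d_{xy} \equiv 1 \pmod 4$, and $d_{xy} \equiv 3 \pmod 4$ shows that $r_2(K_3) = 0$, so $h_{K_3}$ is odd. The identity $q(K)\, h_{K_1} h_{K_2} h_{K_3} = 2 h_K = 8$ from Theorem~\ref{2.1} then forces $h_{K_3} = 1$.

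Next I would normalize the parameters. Writing $\ell = d_{xy}$, the condition that $\ell$ is prime means the symmetric difference of the prime-divisor sets of $x$ and $y$ is exactly $\{\ell\}$, so (after swapping $x$ and $y$ if necessary) $y = \ell x$ with $\ell \nmid x$. Setting $n = \omega(x)$, one has $\omega(y) = n+1$, and Corollary~\ref{C24} yields $2^{n-1} \mid h_{K_1}$ (for $n \geq 1$) and $2^n \mid h_{K_2}$, with one additional factor of $2$ in either bound whenever the corresponding argument is $\equiv 1 \pmod 4$. Since the product equation forces $h_{K_1}$ and $h_{K_2}$ to be powers of $2$ dividing $8$, and $q(K) \in \{1, 2\}$, adding $2$-adic valuations yields $2n - 1 \leq 3$, hence $n \in \{0, 1, 2\}$.

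The body of the proof is the case analysis for $n = 0, 1, 2$, refined by the residue class modulo $4$ of $\ell$ and of the prime divisors of $x$. In each subcase, these residues determine $r_2(K_1)$ and $r_2(K_2)$; combining this with the requirement $q(K)\, h_{K_1} h_{K_2} = 8$ and discarding possibilities incompatible with the divisibility bounds produces exactly the fifteen forms listed in Table~\ref{tab1}, together with their admissible tuples $[h_{K_1}, h_{K_2}, h_{K_3}, q(K)]$. For the converse direction, one verifies that for each such form there exist specific choices of primes realizing each tabulated tuple, the resulting value $h_K = 4$ following from Theorem~\ref{2.1}.

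The main obstacle is pinning down the unit index $q(K)$. Genus theory provides only $2$-rank information, which alone cannot distinguish between, for example, $h_{K_1} = 4$ with $q(K) = 1$ and $h_{K_1} = 8$ with $q(K) = 2$; this is why rows such as~5--7, 10--12, 14, and~15 list several admissible tuples. Separating these cases requires an explicit analysis of the fundamental unit of the real quadratic field $K_3$, and in particular whether it is a norm from $K$, which is the main technical content of the argument.
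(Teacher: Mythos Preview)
Your forward-direction strategy is essentially the paper's: both use Theorem~\ref{2.1} together with the genus-theoretic bounds of Corollaries~\ref{C24} and~\ref{C25}, then split into cases according to the residue of $d_{xy}$ modulo~$4$ and the number and residues of the prime factors of the shared part. Your packaging via $n=\omega(x)$ and the inequality $2n-1\le 3$ is a clean way to obtain the bound that the paper derives case by case; likewise, observing at the outset that $h_{K_3}$ is odd and divides~$8$, hence equals~$1$, is a tidy shortcut the paper leaves implicit.

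Where you go astray is your final paragraph. The proposition does \emph{not} ask you to pin down $q(K)$: the presence of several tuples in rows~5--7, 10--12, 14,~15 reflects exactly that the proof makes no attempt to separate them. The statement is ``$h_K=4$ if and only if $K$ has one of the listed forms with $[h_{K_1},h_{K_2},h_{K_3},q(K)]$ equal to one of the listed tuples,'' and once the tuples are written down the converse is immediate from Theorem~\ref{2.1}, since each tuple satisfies $q(K)h_{K_1}h_{K_2}h_{K_3}=8$. No analysis of fundamental units or norms is needed; nor is any verification that each row is nonempty. The actual determination of which tuple occurs for a given field is deferred to the computational enumeration later in the paper (proof of Theorem~\ref{T4}), where one simply checks $h_K=4$ directly in \texttt{SageMath} without ever computing $q(K)$ separately. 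So what you call ``the main technical content of the argument'' is in fact not part of this proof at all.
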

\begin{proof} Let $h_K = 4$ and $d_{xy} = p\; \text{or}\; 2$,  where $p$ is an odd prime. Then the real quadratic subfield $K_3 = \Q(\sqrt{d_{xy}})$ has odd class number by virtue of Theorem~\ref{2.2}. In this setting, it follows that either $(x, y) = (pv, v)$ or $(x, y) = (2v, v)$ for some square-free positive integer $v$. 
 As $h_K = 4$, Corollary \ref{C24} and Theorem \ref{2.1} imply that $v$ must have at most two distinct prime factors. We now split the analysis into cases according to $d_{xy}$.\\

\noindent
\textbf{Case (i):} Let $d_{xy} = p \equiv 1 \pmod{4}$, with $x = pv$ and $y = v$. \\
\noindent
\textbf{ I:} Let $v = qr$ or $v =2r$, where $q$ and $r$ are distinct odd primes. For $v = qr$, we have either $v \equiv 1 \pmod 4$ or $v \equiv 3 \pmod 4$. 

\noindent
\textbf{I(A):} Consider $v = qr \equiv 1 \pmod 4 $. Then $x = pqr \equiv 1 \pmod 4$ and $y =qr \equiv 1 \pmod 4 $. Using Corollary \ref{C24}, it follows that $8 \mid h_{K_1}$ and $4 \mid h_{K_2}$. Applying Theorem \ref{2.1}, we conclude that $h_K \neq 4$. Thus, this case does not yield any fields with class number $4$.\\
\textbf{I(B):} Consider $v = qr \equiv 3 \pmod 4$. Without loss of generality, we may assume that  $q \equiv 1 $ and $r \equiv 3 \pmod 4$. By Corollary \ref{C24}, we have $4 \mid h_{K_1}$ and $2 \mid h_{K_2}$. Applying Theorem~\ref{2.1}, we see that $h_K = 4$ can occur only when $h_{K_1} = 4$, $h_{K_2} = 2$, $h_{K_3} = 1$, and $q(K) = 1$, i.e., $[h_{K_1}, h_{K_2}, h_{K_3}, q(K)] = [4, 2, 1, 1].$
By relabeling $p$, $q$, and $r$ as $p_1$, $p_2$, and $q_1$, respectively, we obtain the first row of Table~\ref{tab1}.

\noindent
\textbf{I(C):} Suppose $v = 2r$ for an odd prime $r$. Then $x = 2 p r$ and $y = 2 r$. Applying Corollary~\ref{C24}, we again obtain $4 \mid h_{K_1}$ and $2 \mid h_{K_2}$ for any odd prime $r$. Using Theorem~\ref{2.1}, we see that $h_K = 4$ if and only if $h_{K_1} = 4$, $h_{K_2} = 2$, $h_{K_3} = 1$, and $q(K) = 1$. Relabeling $p$ and $r$ as $p_1$ and $p_2$ when $r \equiv 1 \pmod{4}$, or as $p_1$ and $q_1$ when $r \equiv 3 \pmod{4}$, yields the second and third rows of Table~\ref{tab1}.\\

\noindent
\textbf{II:} Now consider \( v = q \) or \( v = 2 \), where \( q \) is an odd prime.\\
\textbf{II(A):} Let $v = q \equiv 1 \pmod{4}$. Then $x = p q \equiv 1 \pmod{4}$ and $y = q \equiv 1 \pmod{4}$. By Corollary~\ref{C24}, it follows that $4 \mid h_{K_1}$ and $2 \mid h_{K_2}$. Applying Theorem~\ref{2.1}, we find that $h_{K_1} = 4$, $h_{K_2} = 2$, $h_{K_3} = 1$, and $q(K) = 1$. Relabeling $q$ and $p$ as $p_1$ and $p_2$, respectively, yields the fourth row of Table~\ref{tab1}.\\
\textbf{II(B):} Consider $v = q \equiv 3 \pmod{4}$. Then $x = p q \equiv 3 \pmod{4}$ and $y = q \equiv 3 \pmod{4}$. By Corollary~\ref{C24}, we have $2 \mid h_{K_1}$ and $h_{K_2}$ is odd. Applying Theorem~\ref{2.1}, the possible values for the tuple $\big[h_{K_1}, h_{K_2}, h_{K_3}, q(K)\big]$ are $\{[4,\, 1,\, 1,\, 2],\, [8,\, 1,\, 1,\, 1]\}$. Relabeling $p$ and $q$ as $p_1$ and $q_1$, respectively, gives the fifth row of Table~\ref{tab1}.\\
\textbf{II(C):} Consider the case where $v = 2$. Then $x = 2p$ and $y = 2$. In this situation, Corollary~\ref{C24} together with Theorem~\ref{2.1} shows that the possible values for the tuple $\big[h_{K_1},\, h_{K_2},\, h_{K_3},\, q(K)\big]$ are $\{[4,\, 1,\, 1,\, 2],\, [8,\, 1,\, 1,\, 1]\}$. Relabeling $p$ with $p_1$ provides the sixth row of Table~\ref{tab1}.\\

\noindent \textbf{III:} Consider $v = 1$. Then we have $K = \Q(\sqrt{-p},\, \sqrt{-1})$. From Corollary~\ref{C24}, we have $2 \mid h_{K_1}$. We know that the class number of $\Q(\sqrt{-1})$ is $1$.  Therefore, applying Theorem~\ref{2.1}, we see that \[\big[h_{K_1},\, h_{K_2},\, h_{K_3},\, q(K)\big] \in \{[4,\, 1,\, 1,\, 2], [8,\, 1,\, 1,\, 1]\}.\] Setting $p = p_1$ yields the seventh row of Table~\ref{tab1}.\\

\noindent
\textbf{Case (ii):} Assume that $d_{xy} = p \equiv 3 \pmod 4$, and let $x = pv $ and $y= v$.\\
\textbf{I:} Let $v = qr$ or $v = 2r$, where $q$ and $r$ are  distinct odd primes. The value of $v = qr$ can satisfy either $v \equiv 1 \pmod 4$ or $v \equiv 3 \pmod 4$.\\
\textbf{I(A):} Assume that $v \equiv 1 \pmod 4$. We see that $x$ has three distinct prime factors, and $y \equiv 1 \pmod 4$ has two distinct prime factors. Therefore, Corollary \ref{C24} implies that $4\mid h_{K_1}$ and $4 \mid h_{K_2}$. Consequently, by Theorem \ref{2.1}, we conclude that $h_K \neq 4$. Thus, we discard this case.\\
\textbf{I(B):} For $v \equiv 3 \pmod 4$, we have $x \equiv 1 \pmod 4$ and $y \equiv \ 3 \pmod 4$. Thus, applying Corollary \ref{C24} and Theorem \ref{2.1} again, we obtain $h_K \neq 4$. Hence, this case is not possible.\\
\textbf{I(C):} Consider $v =2r$. Then, $x =2pr$ and $y =2r$. From Corollary \ref{C24}, it follows that  $4 \mid h_{K_1}$ and $2 \mid h_{K_2}$ for any odd prime $r$. Using this in Theorem \ref{2.1} to obtain $h_K =4$, we find that $h_{K_1}=4, h_{K_2}=2, h_{K_3}=1$ and $q(K) =1$ for any odd prime $r$. Replacing $p$ and $r$ with $q_1$ and $p_1$ when $r \equiv 1 \pmod 4$, and with $q_1$ and $q_2$ when $r \equiv 3 \pmod 4$ respectively,  yields the $8$th  and $9$th rows of Table \ref{tab1}.\\

\noindent
\textbf{II:} Consider $v = q$ or $v =2$, where $q$ is an odd prime.\\
\textbf{II(A):} Consider $v = q \equiv 1 \pmod 4$. Then, $x =pq \equiv 3 \pmod 4$ and $y =q \equiv 1 \pmod 4$. From Corollary \ref{C24}, it follows that $2 \mid h_{K_1}$ and $2 \mid h_{K_2}$. Thus,  Theorem \ref{2.1} gives that $$[h_{K_1}, h_{K_2}, h_{K_3}, q(K)] \in \{[2,\ 2,\ 1,\ 2],\ [4,\ 2,\ 1,\ 1],\ [2,\ 4,\ 1,\ 1]\}.$$ Using $p_1$ and $q_1$ in place of $q$ and $p$, respectively, we obtain the $10$th row of  Table \ref{tab1}.\\
\textbf{II(B):} Consider $v = q \equiv 3 \pmod 4$. Then, we have $x =pq \equiv 1 \pmod 4$ and $y =q \equiv 3 \pmod 4$. From Corollary \ref{C24}, it follows that $4 \mid h_{K_1}$ and $h_{K_2}$ is odd. Applying Theorem \ref{2.1}, we obtain that the possible values for the tuple  $[h_{K_1}, h_{K_2}, h_{K_3}, q(K)]$ are $[4, 1, 1, 2]$ and $  [8, 1, 1, 1].$ Identifying 
$p$ and $q$ with $q_2$ and $q_1$, respectively, confirms the $11$th row of Table \ref{tab1}.\\
\textbf{II(C):} Let $v = 2$. Then, $x = 2p$ and $y = 2$. In this case, Corollary \ref{C24} and Theorem \ref{2.1} together imply that $$[h_{K_1}, h_{K_2}, h_{K_3}, q(K)] \in \{[4, 1, 1, 2], [8, 1, 1, 1]\}.$$ Replacing $p$ by $q_1$, we obtain the $12$th row of Table \ref{tab1}.\\

\noindent
\textbf{III:} Consider $v = 1$. Then, $x = p$ and $y = 1$. By Corollary \ref{C24}, $h_{K_1}$ is odd, and it is known that $h_{K_2} =1$. Using these facts in Theorem \ref{2.1} to obtain $h_{K} = 4$, we get that $q(K) >2$, which is not possible. Thus, we exclude this case.\\

\noindent
\textbf{Case (iii):} Let $d_{xy} = 2$. Consider the case  $x =2v$ and $y = v$. In this case, $h_{K_3} = 1$.\\
\textbf{I:} Suppose $v=qr$, where $q$ and $r$ are distinct odd primes. \\
\textbf{I(A):} Consider $v = qr \equiv 1 \pmod 4$. Then, $x = 2qr \equiv 2 \pmod 4 $ and $y = qr\equiv 1 \pmod 4$. From Corollary \ref{C24}, it follows that $4$ divides both $h_{K_1}$ and $h_{K_2}$, which is not possible by Theorem \ref{2.1}, as we require $h_K = 4$.\\
\textbf{I(B):} Consider $v = qr \equiv 3 \pmod 4$. Then, $x = 2qr \equiv 2 \pmod 4 $ and $y = qr \equiv 3 \pmod 4$. From Corollary \ref{C24}, we find that $4$ divides  $h_{K_1}$ and $2$ divides $h_{K_2}$. Thus, from Theorem \ref{2.1} for $h_K =4$, we have $[h_{K_1}, h_{K_2}, h_{K_3}, q(K)] = [4, 2, 1, 1]$. Without loss of generality, we may assume that $q \equiv 1 \pmod 4$ and $r \equiv 3 \pmod 4$. Replacing $q$ and $r$ with $p_1$ and $q_1$, respectively, gives the $13$th row of Table \ref{tab1}.\\

\noindent
\textbf{II:} Suppose $v=q$, where $q$ is an odd prime. Then, $x = 2q$ and $y = q$.\\
\noindent
\textbf{II(A):} Let $q \equiv 1 \pmod 4$. Using Corollary \ref{C24} and Theorem \ref{2.1}, we obtain that 
$$[h_{K_1}, h_{K_2}, h_{K_3}, q(K)] \in \{[2,\ 2,\ 1,\ 2],\  [4,\ 2,\ 1,\ 1],\  [2,\ 4,\ 1,\ 1]\}.$$ Using $p_1$ for $q$, we conclude the $14$th row of Table \ref{tab1}.\\
\textbf{II(B):} Let $q \equiv 3 \pmod 4$. Using a similar argument as in the previous case yields the  $15$th row of  Table \ref{tab1}.\\

\noindent
\textbf{III:} Suppose $v=1$. Then, we have $K =\Q (\sqrt{-2}, \sqrt{-1})$. Here, $h_{K_i} = 1$ for $i = 1, 2, 3.$ Thus, to obtain $h_{K} = 4$, Theorem \ref{2.1} implies that $q(K) > 2$, which is not possible. Thus, we discard this case.\\

Conversely, using Theorem \ref{2.1}, it is clear that each field $K$ listed in Table \ref{tab1} has class number $4$. This completes the proof. 
\end{proof}

\begin{prop} \label{P34} Let $K=  \Q (\sqrt{-x}, \sqrt{-y})$ such that $d_{xy}$ has exactly two distinct prime factors. Then, $h_K = 4$ if and only if $K$ is one of the fields listed in the following table:
\begin{center}   
\begin{longtable*}[h!]{|c|l|c|}
\hline
\# & \text{Field } K & $[h_{K_1},\ h_{K_2},\ h_{K_3},\ q(K)]$ \\
\hline
$1$ & $\mathbb{Q}(\sqrt{-p_1p_2q_1}, \sqrt{-q_1})$ & $[4,\ 1,\ 2,\ 1]$\\
\hline
$2$ & $\mathbb{Q}(\sqrt{-2p_1p_2}, \sqrt{-2})$ & $[4,\ 1,\ 2,\ 1]$\\
\hline
$3$ & $\mathbb{Q}(\sqrt{-p_1p_2}, \sqrt{-1})$ & $[4,\ 1,\ 2,\ 1]$\\
\hline
$4$ & $\mathbb{Q}(\sqrt{-p_1q_1}, \sqrt{-p_2q_1})$ & $[2,\ 2,\ 2,\ 1]$\\
\hline

$5$ & $\mathbb{Q}(\sqrt{-2p_1}, \sqrt{-2p_2})$ & $[2,\ 2,\ 2,\ 1]$\\
\hline

$6$ & $\mathbb{Q}(\sqrt{-p_1}, \sqrt{-p_2})$ & $[2,\ 2,\ 2,\ 1]$\\
\hline
$7$ & $\mathbb{Q}(\sqrt{-q_1q_2q_3}, \sqrt{-q_1})$ & $[4,\ 1,\ 1,\ 2],\ [8,\ 1,\ 1,\ 1]$\\
\hline
$8$ & $\mathbb{Q}(\sqrt{-2q_1q_2}, \sqrt{-2})$ & $[4,\ 1,\ 1,\ 2],\ [8,\ 1,\ 1,\ 1]$\\
\hline
$9$ & $\mathbb{Q}(\sqrt{-q_1q_2}, \sqrt{-1})$ & $[4,\ 1,\ 1,\ 2],\ [8,\ 1,\ 1,\ 1]$\\
\hline
$10$ & $\mathbb{Q}(\sqrt{-p_1q_1}, \sqrt{-p_1q_2})$ & $[2,\ 2,\ 1,\ 2],\ [2,\ 4,\ 1,\ 1]$ \\
\hline

$11$ & $\mathbb{Q}(\sqrt{-2q_1}, \sqrt{-2q_2})$ & $[2,\ 2,\ 1,\ 2],\ [2,\ 4,\ 1,\ 1]$ \\
\hline

$12$ & $\mathbb{Q}(\sqrt{-2p_1q_1}, \sqrt{-2})$ & $[4,\ 1,\ 2,\ 1]$\\
\hline
$13$ & $\mathbb{Q}(\sqrt{-p_1q_1}, \sqrt{-1})$ & $[2,\ 1,\ 2,\ 2],\ [4,\ 1,\ 2,\ 1],\ [2,\ 1,\ 4,\ 1]$\\
\hline
$14$ & $\mathbb{Q}(\sqrt{-2p_1}, \sqrt{-2q_1})$ & $[2,\ 2,\ 2,\ 1]$\\
\hline

$15$ & $\mathbb{Q}(\sqrt{-p_1}, \sqrt{-q_1})$ & $[2,\ 1,\ 2,\ 2],\ [4,\ 1,\ 2,\ 1],\ [2,\ 1,\ 4,\ 1]$\\
\hline
$16$ & $\mathbb{Q}(\sqrt{-2p_1q_1}, \sqrt{-q_1})$ &  $[4,\ 1,\ 2\ 1]$\\
\hline
$17$ & $\mathbb{Q}(\sqrt{-2p_1}, \sqrt{-1})$ &  $[2,\ 1,\ 2,\ 2],\ [4,\ 1,\ 2,\ 1],\ [2,\ 1,\ 4,\ 1]$\\
\hline
$18$ & $\mathbb{Q}(\sqrt{-2q_1}, \sqrt{-p_1q_1})$ &  $[2,\ 2,\ 2,\ 1]$\\
\hline

$19$ & $\mathbb{Q}(\sqrt{-p_1}, \sqrt{-2})$ & $[2,\ 1,\ 2,\ 2],\ [4,\ 1,\ 2,\ 1],\ [2,\ 1,\ 4,\ 1]$\\
\hline
$20$ & $\mathbb{Q}(\sqrt{-2p_1q_1}, \sqrt{-p_1})$ & $[4,\ 2,\ 1,\ 1]$\\
\hline
$21$ & $\mathbb{Q}(\sqrt{-2q_1q_2}, \sqrt{-q_1})$ & $[4,\ 1,\ 1,\ 2],\ [8,\ 1,\ 1,\ 1]$\\
\hline
$22$ & $\mathbb{Q}(\sqrt{-2q_1}, \sqrt{-1})$ & $[4,\ 1,\ 1,\ 2],\ [8,\ 1,\ 1,\ 1]$\\
\hline
$23$ & $\mathbb{Q}(\sqrt{-p_1q_1}, \sqrt{-2 p_1})$ & $[2,\ 2,\ 1,\ 2],\ [4,\ 2,\ 1,\ 1],\ [2,\ 4,\ 1,\ 1]$\\
\hline

$24$ & $\mathbb{Q}(\sqrt{-q_1q_2}, \sqrt{-2q_1})$ & $[4,\ 2,\ 1,\ 1]$\\
\hline

\end{longtable*}
\captionof{table}{}
\label{tab2}
\end{center}
\end{prop}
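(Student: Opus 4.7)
The strategy mirrors that of Proposition~\ref{P33}: I split into subcases depending on the shape of $d_{xy}$, analyze the admissible common factors $v$, and read off the admissible tuples $[h_{K_1}, h_{K_2}, h_{K_3}, q(K)]$ from Theorem~\ref{2.1}. Since $d_{xy}$ has exactly two distinct prime divisors, there are five subcases for $d_{xy}$, according to the residue classes modulo $4$ and whether $2$ appears: (a) $p_1 p_2$; (b) $p_1 q_1$; (c) $q_1 q_2$; (d) $2 p_1$; (e) $2 q_1$. In subcases (a), (b), and (d), Corollary~\ref{C25} forces $2 \mid h_{K_3}$; in subcases (c) and (e), $h_{K_3}$ may be odd. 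Since $h_K = 4$, Theorem~\ref{2.1} gives $q(K)\,h_{K_1}\,h_{K_2}\,h_{K_3} = 8$ with $q(K) \in \{1, 2\}$, so $h_{K_1}\,h_{K_2}\,h_{K_3} \in \{4, 8\}$.

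Within each subcase, I write $x = a v$ and $y = b v$, where $a, b$ are coprime square-free divisors with $ab = d_{xy}$ and $v$ is a square-free positive integer coprime to $ab$ (encoding the ramified primes common to $x$ and $y$). For each shape of $d_{xy}$ there are essentially two partitions (up to swapping $x$ and $y$): both primes of $d_{xy}$ assigned to one side, or split one-and-one. In each partition, Corollary~\ref{C24} applied to $K_1$ and $K_2$ restricts the number of prime divisors of $v$ (typically to at most one additional prime, sometimes with a residue condition modulo $4$), since $h_{K_1}, h_{K_2}$ are forced to be small divisors of $8$. For each surviving configuration, I enumerate solutions of $q(K)\,h_{K_1}\,h_{K_2}\,h_{K_3} = 8$ consistent with the lower bounds on the $h_{K_i}$ coming from Corollaries~\ref{C24}--\ref{C25}, then relabel the primes by their residues modulo $4$ to match the rows of Table~\ref{tab2}. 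The converse direction is immediate: for each row, Theorem~\ref{2.1} with the listed tuple gives $h_K = 4$.

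The main obstacle is bookkeeping. The five shapes of $d_{xy}$, together with the various partitions of their prime factors between $x$ and $y$ and the admissible shapes of $v$ (with careful tracking of residues modulo $4$), produce a long list of micro-cases, each requiring a separate check against the divisibility conditions from Corollaries~\ref{C24}--\ref{C25}. The most delicate micro-cases are those in which both $h_{K_1}$ and $h_{K_2}$ are forced to be even: here the constraint $q(K)\,h_{K_1}\,h_{K_2}\,h_{K_3} = 8$ admits several tuples, giving rise to multiple entries per row (as seen in rows 10, 13, 15, 17, 19, and 23 of Table~\ref{tab2}), and some care is needed to verify that no possibility is missed or double-counted.
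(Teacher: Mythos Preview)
Your approach is essentially identical to the paper's: the same case split by the residue classes of the two primes dividing $d_{xy}$ (the paper organises first by $d_{xy}\pmod 4$ and then by the individual residues, which amounts to the same five shapes you list), the same parametrisation $x=av$, $y=bv$ with $v$ coprime to $d_{xy}$, and the same use of Corollaries~\ref{C24}--\ref{C25} to bound the prime content of $v$ before enumerating tuples via Theorem~\ref{2.1}.

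One small inaccuracy in your commentary: you attribute the multi-tuple rows to the situation where \emph{both} $h_{K_1}$ and $h_{K_2}$ are forced even, but in rows~13, 15, 17, 19 one has $h_{K_2}=1$ throughout (the flexibility there is between $h_{K_1}$ and $h_{K_3}$, or between $h_{K_1}$ and $q(K)$), and you omit rows~7--9, 11, 21, 22, which also carry multiple tuples. This does not affect the validity of the strategy, only the narrative around it.
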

\begin{proof}
Recall that $K = \mathbb{Q}(\sqrt{-x}, \sqrt{-y})$, with $K_1 = \mathbb{Q}(\sqrt{-x})$, $K_2 = \mathbb{Q}(\sqrt{-y})$, and $K_3 = \mathbb{Q}(\sqrt{d_{xy}})$. Clearly, $x \neq y$. Let $h_K = 4$ and $d_{xy} = pq$ or $d_{xy} = 2p$, where $p$ and $q$ are odd primes. Since $K_1$ and $K_2$ are imaginary number fields, for $d_{xy} \equiv 1, 3 \pmod 4$, without loss of generality, the possible values of $x$ and $y$ are as follows:
\begin{itemize}
    \item  $x = pqv $ and $y= v$,
    \item $x = pv$ and $y = qv$,
     \end{itemize}
where $v$ is some square-free positive integer.
Since $K_1$ and $K_2$ are imaginary number fields,  for $d_{xy} \equiv 2\pmod 4$, we have the following possible pairs $(x, y)$:
\begin{itemize}
\item  $x = 2pv $ and $y= v$,
      \item $x = pv$ and $y = 2v$,
 \end{itemize}
where $v$ is some square-free positive integer. Since $h_K = 4$, from Corollary \ref{C24} and Theorem \ref{2.1}, it is clear that $v$ must have at most one prime factor in all possible pairs $(x, y)$ for the cases $d_{xy} \equiv 1, 3 \pmod 4$ and $d_{xy} \equiv 2\pmod 4$. Thus, $v$ can be an odd prime $r$, or equal to $2$ or $1$. Note that the case $v = 2$ is not possible for $d_{xy} \equiv 2\pmod 4$.\\ \\
\textbf{Case (i):} Consider the case where $d_{xy} = pq \equiv 1 \pmod 4$. This leads to two subcases: $p, q \equiv 1 \pmod 4$; or $p, q \equiv 3 \pmod 4$.\\
\textbf{Subcase (a):} Let $p \equiv 1 \pmod 4$ and $q \equiv 1 \pmod 4$. By Corollary \ref{C25}, we have $2 \mid h_{K_3}$.\\
\textbf{I:} Assume that $x = pqv $ and $y= v$.\\
\textbf{I(A):} Let $v = r$, where $r$ is an odd prime. Corollary \ref{C24} and Theorem \ref{2.1} ensure that $r \not \equiv 1 \pmod 4$. Therefore, assume that $r  \equiv 3 \pmod 4$. Applying Corollary \ref{C24} and Theorem \ref{2.1} again, we have $[h_{K_1},\ h_{K_2},\ h_{K_3},\ q(K)]$ $= [4,\ 1,\ 2,\ 1]$. Thus, replacing $p, q, r$ with $p_1, p_2, q_1$, respectively, we obtain the first row of Table \ref{tab2}.\\
\textbf{I(B):} Let $v = 2$. Then, $x = 2pq$ and $y = 2$. The class number of $\Q(\sqrt{-2})$ is $1$, 
and by Corollary \ref{C24} we have $4 \mid h_{K_1}$. Thus, Theorem \ref{2.1} implies that 
$[h_{K_1},\ h_{K_2},\ h_{K_3},\ q(K)] = [4,\ 1,\ 2,\ 1]$. Now, using $p_1$ and $p_2$ for $p$ and $q$, respectively, we find the $2$nd row of Table \ref{tab2}.\\
\textbf{I(C):} Let $v = 1$. Then, $x = pq$ and $y = 1$. The class number of $\Q(\sqrt{-1})$ is $1$. Since $x$ has two distinct prime factors and $x \equiv 1 \pmod 4$, Corollary \ref{C24} implies that $4 \mid h_{K_1}$. Now, applying Theorem \ref{2.1}, we have 
$[h_{K_1},\ h_{K_2},\ h_{K_3},\ q(K)] = [4,\ 1,\ 2,\ 1]$. Thus, changing  $p$ and $q$ by $p_1$ and $p_2$, respectively, we have the $3$rd row of Table \ref{tab2}.\\

\noindent
\textbf{II:} Assume that $x = pv$ and $y = qv$.\\
\textbf{II(A):} Let $v = r$, where $r$ is an odd prime. Then, $x = pr$ and $y = qr$. From Corollary \ref{C24} and Theorem \ref{2.1}, it follows that $r\not \equiv 1 \pmod 4$. So, let $r\equiv 3 \pmod 4$. Applying Corollary \ref{C24} and Theorem \ref{2.1} again, we have $[h_{K_1},\ h_{K_2},\ h_{K_3},\ q(K)] = [2,\ 2,\ 2,\ 1]$. Thus, replacing $p, q, r$ with $p_1, p_2, q_1$, respectively, we obtain the $4$th row of Table \ref{tab2}.\\
\textbf{II(B):} Let $v = 2$. Then, $x = 2p$ and $y = 2q$. By Corollary \ref{C24}, we have $2 \mid h_{K_1}$ and $2 \mid h_{K_2}$. Thus, Theorem \ref{2.1} guarantees that 
$[h_{K_1},\ h_{K_2},\ h_{K_3},\ q(K)] = [2,\ 2,\ 2,\ 1]$. Now, using $p_1$ and $p_2$ for $p$ and $q$, respectively, we have the $5$th row of Table \ref{tab2}.\\
\textbf{II(C):} Let $v = 1$. Then, $x = p$ and $y = q$. Since $x$ and $y$ are primes and $x, y \equiv 1 \pmod 4$, from Corollary \ref{C24} we find that $2 \mid h_{K_1}$ and $2 \mid h_{K_2}$. Thus, Theorem \ref{2.1} ensures that 
$[h_{K_1},\ h_{K_2},\ h_{K_3},\ q(K)] = [2,\ 2,\ 2,\ 1]$. Hence, changing  $p$ and $q$ by $p_1$ and $p_2$, respectively, we obtain the $6$th row of Table \ref{tab2}.\\

\noindent
\textbf{Subcase (b):} Let $p \equiv 3 \pmod 4$ and $q \equiv 3 \pmod 4$. By Corollary \ref{C25}, we obtain that $h_{K_3}$ is odd. Thus, for $h_K = 4$, Proposition \ref{P21} implies that $h_{K_3} = 1$.\\
\textbf{I:} Assume that $x = pqv $ and $y= v$.\\
\textbf{I(A):} Let $v = r$, where $r$ is an odd prime. Then, $x = pqr $ and $y= r$. Corollary \ref{C24} and Theorem \ref{2.1} confirm that $r \not\equiv 1 \pmod 4$. So, let $r \equiv 3 \pmod 4$. By Corollary \ref{C24}, $4 \mid  h_{K_1}$ and $h_{K_2}$ is odd. Consequently, Theorem \ref{2.1} gives:$$[h_{K_1},\ h_{K_2},\ h_{K_3},\ q(K)] \in \{ [4,\ 1,\ 1,\ 2], [8,\ 1,\ 1,\ 1]\}.$$ The $7$th row of Table \ref{tab2} is thus obtained by replacing $r, q, p$ with $q_1, q_2, q_3$, respectively.\\
\textbf{I(B):} Let $v = 2$. Then, $x = 2pq$ and $y = 2$. The class number of $\Q(\sqrt{-2})$ is $1$. By Corollary \ref{C24}, we see that $4 \mid h_{K_1}$. Applying Theorem \ref{2.1}, we get
$[h_{K_1},\ h_{K_2},\ h_{K_3},\ q(K)] \in \{[4,\ 1,\ 1,\ 2], [8,\ 1,\ 1,\ 1]$\}. Thus, substituting  $q_1$ and $q_2$ for $p$ and $q$, respectively,  yields the $8$th row of Table \ref{tab2}.\\
\textbf{I(C):} Let $v = 1$. Then, $x = pq$ and $y = 1$. The class number of $\Q(\sqrt{-1})$ is $1$. Since $x$ has two distinct prime factors and $x \equiv 1 \pmod 4$, from Corollary \ref{C24} we get that $4 \mid h_{K_1}$. Consequently, Theorem \ref{2.1} gives 
$[h_{K_1},\ h_{K_2},\ h_{K_3},\ q(K)] \in \{[4,\ 1,\ 1,\ 2], [8,\ 1,\ 1,\ 1]\}$. Thus, by setting $p = q_1$ and $q = q_2$, we derive the  $9$th row of Table \ref{tab2}.\\

\noindent
\textbf{II:} Assume that $x = pv$ and $y = qv$.\\
\textbf{II(A):} Let $v = r$, where $r$ is an odd prime. Then, $x = pr$ and $y = qr$. Using Corollary \ref{C24} and Theorem \ref{2.1}, we conclude that $r\not \equiv 3 \pmod 4$. So, let $r\equiv 1 \pmod 4$. Applying Corollary \ref{C24} and Theorem \ref{2.1} again, we have $$[h_{K_1},\ h_{K_2},\ h_{K_3},\ q(K)] \in \{[2,\ 2,\ 1,\ 2], [2,\ 4,\ 1,\ 1], [4,\ 2,\ 1,\ 1]\}.$$  Set $p = q_1, q=q_2$, and $r = p_1$. Note that  we have \begin{align*}
\left\{ \Q(\sqrt{-x}, \sqrt{-y}) :\,  x = p_1q_1,\ y = p_1q_2,\ h_{K_1} = 2,\ h_{K_2} = 4 \right\} & \\ =
 \left\{ \Q(\sqrt{-y}, \sqrt{-x}) :\,  x = p_1q_1,\ y = p_1q_2,\ h_{K_1} = 4,\ h_{K_2} = 2 \right\}.
\end{align*}
Thus, for tuples $[h_{K_1},\ h_{K_2},\ h_{K_3},\ q(K)] = [2,\ 4,\ 1,\ 1]$ and  $[h_{K_1},\ h_{K_2},\ h_{K_3},\ q(K)] = [4,\ 2,\ 1,\ 1]$, we obtain  same list of fields $K$. Hence, we consider only one of the tuples, either $[2,\ 4,\ 1,\ 1]$ or $[4,\ 2,\ 1,\ 1].$ This gives the $10$th row of Table \ref{tab2}.\\
\textbf{II(B):} Let $v = 2$. Then, $x = 2p$ and $y = 2q$. From Corollary \ref{C24}, we have $2 \mid h_{K_1}$ and $2 \mid h_{K_2}$. Thus, Theorem \ref{2.1} gives: 
$$[h_{K_1},\ h_{K_2},\ h_{K_3},\ q(K)] \in \{[2,\ 2,\ 1,\ 2], [2,\ 4,\ 1,\ 1], [4,\ 2,\ 1,\ 1]\}.$$  Note that, using arguments similar to the above case, only one of the tuple pairs $([2,\ 2,\ 1,\ 2], [2,\ 4,\ 1,\ 1])$ or $([2,\ 2,\ 1,\ 2], [4,\ 2,\ 1,\ 1])$ is sufficient. Now, using $q_1$ and $q_2$ for $p$ and $q$, respectively,  we obtain the $11$th row of Table \ref{tab2}.\\
\textbf{II(C):} Let $v = 1$. Then, $x = p$ and $y = q$. Since $x$ and $y$ are primes and $x, y \equiv 3 \pmod 4$,  Corollary \ref{C24} implies that   $h_{K_1}$ and $ h_{K_2}$ are odd. Using this, along with the assumation that $h_K = 4$,  Theorem \ref{2.1} implies that $h_{K_i} = 1$ for $i = 1, 2, 3$ and $q(K) = 8$, which is not possible. Therefore, we exclude this case.\\

\noindent
\textbf{Case (ii):} Consider the case where $d_{xy} = pq \equiv 3 \pmod 4$. This leads to two subcases: $p\equiv 1 \pmod 4$ and $q\equiv 3 \pmod 4$; or $p \equiv 3 \pmod 4$ and $q\equiv 1 \pmod 4$. However, in our case, it is sufficient to consider only one subcase. Therefore, we assume that $p\equiv 1 \pmod 4$ and $q\equiv 3 \pmod 4$. Applying Corollary \ref{C25}, we find that $2 \mid h_{K_3}$.\\
\textbf{I:} Assume that $x = pqv $ and $y= v$.\\
\textbf{I(A):} Let $v = r$, where $r$ is an odd prime. Then, $x =  pqr$ and $y = r$. Corollary \ref{C24} and Theorem \ref{2.1} guarantee that this case is not possible.\\
\textbf{I(B):} Let $v = 2$. Then, $x =  2pq$ and $y = 2$. We have $h_{K_2} = 1$, and by Corollary \ref{C24} we see that $4 \mid h_{K_1}$. Applying Theorem \ref{2.1}, we obtain that $[h_{K_1},\ h_{K_2},\ h_{K_3},\ q(K)] = [4,\ 1,\ 2,\ 1]$. The $12$th row of Table \ref{tab2} is thus obtained by replacing $p$ and $q$ with $p_1$ and $q_1$, respectively.\\
\textbf{I(C):} Consider $v = 1$. Then,  $x =  pq$ and $y = 1$. By Corollary \ref{C24}, $2 \mid h_{K_1}$, and it is easy to see that $h_{K_2} = 1$. Therefore, Theorem \ref{2.1} gives: $$[h_{K_1},\ h_{K_2},\ h_{K_3},\ q(K)] \in \{[2,\ 1,\ 2,\ 2], [4,\ 1,\ 2,\ 1], [2,\ 1,\ 4,\ 1]\}.$$ Thus, replacing $p$ and $q$ with $p_1$ and $q_1$, respectively, yields the $13$th row of Table \ref{tab2}.\\

\noindent
\textbf{II:} Assume that $x = pv$ and $y = qv$.\\
\textbf{II(A):} Let $v = r$, where $r$ is an odd prime. Then, $x = pr$ and $y = qr$. In this case, for odd prime $r$, Corollary \ref{C24} and Theorem \ref{2.1} imply that $h_K \neq 4$.\\
\textbf{II(B):} Suppose $v = 2$, then $x = 2p$ and $y = 2q$. From Corollary \ref{C24}, $2 \mid h_{K_1}$ and $2 \mid h_{K_2}$. From this and $2 \mid h_{K_3}$, Theorem \ref{2.1} gives that $[h_{K_1},\ h_{K_2},\ h_{K_3},\ q(K)] = [2,\ 2,\ 2,\ 1]$. Thus, by setting $ p_1 = p$ and $q_1 = q $, we obtain the $14$th row of Table \ref{tab2}.\\
\textbf{II(C):} Suppose $v = 1$, then $x = p$ and $y = q$. Using Corollary \ref{C24}, we see that $2 \mid h_{K_1}$  and $h_{K_2}$ is odd. Therefore, Theorem \ref{2.1} implies that $$[h_{K_1},\ h_{K_2},\ h_{K_3},\ q(K)] \in \{[2,\ 1,\ 2,\ 2], [4,\ 1,\ 2,\ 1], [2,\ 1,\ 4,\ 1]\}.$$ Thus, replacing $p$ and $q$ with $p_1$ and $q_1$, respectively, yields the $15$th row of Table \ref{tab2}.

\noindent
\textbf{Case (iii):} Consider  $d_{xy} = 2p \equiv 2 \pmod 4$. This gives rise to two subcases: $p\equiv 1 \pmod 4$ and $p\equiv 3 \pmod 4$.\\
\textbf{Subcase (a):} Assume that $p \equiv 1 \pmod 4$. Applying Corollary \ref{C25}, we have $2 \mid h_{K_3}$.\\
\textbf{I:} Assume that $x = 2pv $ and $y= v$.\\
\textbf{I(A):} Let $v = r$, where $r$ is an odd prime. Then, $x =  2pr$ and $y = r$. Using Corollary \ref{C24} and Theorem \ref{2.1}, we conclude that $r \not\equiv 1 \pmod 4$. So, let $r \equiv 3 \pmod 4$. Applying Corollary \ref{C24} and Theorem \ref{2.1} again, we have $[h_{K_1},\ h_{K_2},\ h_{K_3},\ q(K)] = [4,\ 1,\ 2,\ 1]$. This establishes the $16$th row of Table \ref{tab2} by substituting $p_1$ and $q_1$ for $p$ and $r$, respectively.\\
\\
\textbf{I(B):} Suppose $v= 1$, then $x =  2p$ and $y = 1$. From this case, we obtain the $17$th row of Table \ref{tab2} by substituting $p_1$ for $p$. \\
\textbf{II:}  Assume that $x = pv$ and $y = 2v$.\\
\textbf{II(A):} Let $v = r$, where $r$ is an odd prime. Then, $x =  pr$ and $y = 2r$. Corollary \ref{C24} and Theorem \ref{2.1} ensure that $r \not \equiv 1 \pmod 4$. So, assume $r \equiv 3 \pmod 4$. Applying Corollary \ref{C24} and Theorem \ref{2.1} again, and substituting $p_1$ and $q_1$ for $p$ and $r$, yields the $18$th row of Table \ref{tab2}.\\
\textbf{II(B):} Suppose $v= 1$, then $x =  p$ and $y = 2$. We have $h_{K_2} = 1$, and by Corollary \ref{C24} we see that $2 \mid h_{K_1}$. Thus, applying Theorem \ref{2.1}, and substituting $p_1$ for $p$, we obtain the $19$th row of Table \ref{tab2}.\\
\textbf{Subcase (b):} Assume that $p \equiv 3 \pmod 4$. Applying Corollary \ref{C25}, we find that  $h_{K_3}$ is odd.\\
\textbf{I:} Assume that $x = 2pv $ and $y= v$.\\
\textbf{I(A):} Let $v = r$, where $r$ is an odd prime. Then, $x =  2pr$ and $y = r$. For the case $r \equiv 1 \pmod 4$, Corollary \ref{C24} and Theorem \ref{2.1} yield the $20$th row of Table \ref{tab2} by taking $p_1$ and $q_1$ for $r$ and $p$, respectively.\\

For the case $r \equiv 3 \pmod 4$, Corollary \ref{C24} and Theorem \ref{2.1} give: $$[h_{K_1},\ h_{K_2},\ h_{K_3},\ q(K)] \in \{[4,\ 1,\ 1,\ 2], [8,\ 1,\ 1,\ 1]\}.$$  This establishes the $21$st row of Table \ref{tab2} by substituting $q_1$ and $q_2$ for $r$ and $p$, respectively.\\
\textbf{I(B):} Suppose $v= 1$, then $x =  2p$ and $y = 1$. From this case, we obtain the $22$nd row of Table \ref{tab2} by substituting $q_1$ for $p$.\\

\noindent
\textbf{II:}  Assume that $x = pv$ and $y = 2v$.\\
\textbf{II(A):} Let $v = r$, where $r$ is an odd prime. Then, $x =  pr$ and $y = 2r$. The odd prime $r$ can satisfy either $r \equiv 1 \pmod 4$ or $r \equiv 3 \pmod 4$. First, consider  $r \equiv 1 \pmod 4$. Applying Corollary \ref{C24} and Theorem \ref{2.1} yields the $23$rd row of Table \ref{tab2} by taking $p_1$ and $q_1$ for $r$ and $p$, respectively.

Now, consider the case $r \equiv 3 \pmod{4}$. By Corollary~\ref{C24}, we have $4 \mid h_{K_1}$ and $2 \mid h_{K_2}$. Applying Theorem~\ref{2.1} and relabeling $q_1$ and $q_2$ as $r$ and $p$, respectively, yields the last row of Table~\ref{tab2}.\\
\textbf{II(B):} Suppose $v = 1$, so that $x = p$ and $y = 2$. In this situation, $h_{K_2} = 1$. By Corollary~\ref{C24}, $h_{K_1}$ must be odd. However, applying Theorem~\ref{2.1} to obtain $h_K = 4$ leads to $q(K) > 2$, which is not possible. Consequently, this case is eliminated.

Conversely, using Theorem \ref{2.1}, it is obvious that each field $K$ listed in Table \ref{tab2} has class number $4$. This completes the proof.
\end{proof}

\begin{prop} \label{P36} Let $K=  \Q (\sqrt{-x}, \sqrt{-y})$ such that $d_{xy}$ has exactly three distinct prime factors. Then, $h_K = 4$ if and only if $K$ is one of the fields that appeared in the following table:
\begin{center}
  $$
\begin{array}{|c|l|c|}
\hline
\# & \text{Field } K & [h_{K_1},\ h_{K_2},\ h_{K_3},\ q(K)] \\
\hline
1 & \mathbb{Q}(\sqrt{-p_1q_1}, \sqrt{-q_2}) & [2,\ 1,\ 2,\ 2],\ [4,\ 1,\ 2,\ 1],\ [2,\ 1,\ 4,\ 1]\\
\hline
2 & \mathbb{Q}(\sqrt{-2p_1q_1}, \sqrt{-1}) & [4,\ 1,\ 2,\ 1]\\
\hline
3 & \mathbb{Q}(\sqrt{-p_1q_1}, \sqrt{-2}) & [2,\ 1,\ 2,\ 2],\ [4,\ 1,\ 2,\ 1],\ [2,\ 1,\ 4,\ 1]\\
\hline
4 & \mathbb{Q}(\sqrt{-2p_1}, \sqrt{-q_1}) & [2,\ 1,\ 2,\ 2],\ [4,\ 1,\ 2,\ 1],\ [2,\ 1,\ 4,\ 1]\\
\hline

5 & \mathbb{Q}(\sqrt{-2q_1}, \sqrt{-p_1}) & [2,\ 2,\ 2,\ 1]\\
\hline
6 & \mathbb{Q}(\sqrt{-2q_1q_2}, \sqrt{-1}) & [4,\ 1,\ 2,\ 1]\\
\hline
7 & \mathbb{Q}(\sqrt{-q_1q_2}, \sqrt{-2}) & [4,\ 1,\ 2,\ 1]\\
\hline
8 & \mathbb{Q}(\sqrt{-2q_1}, \sqrt{-q_2}) & [2,\ 1,\ 2,\ 2],\ [4,\ 1,\ 2,\ 1],\ [2,\ 1,\ 4,\ 1]\\
\hline

\end{array}
$$
\captionof{table}{}
\label{tab3}
\end{center}
\end{prop}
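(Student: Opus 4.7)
The plan is to follow the same combinatorial case analysis used in Propositions~\ref{P33} and~\ref{P34}, organized around the prime decomposition of $d_{xy}$. The three workhorses are Corollary~\ref{C25}, which forces a lower bound on the $2$-part of $h_{K_3}$ in terms of the number and residues modulo $4$ of the primes dividing $d_{xy}$; Corollary~\ref{C24}, which does the analogous job for $h_{K_1}$ and $h_{K_2}$ in terms of the factorizations of $x$ and $y$; and Theorem~\ref{2.1}, which couples everything via $h_K = q(K)\,h_{K_1}h_{K_2}h_{K_3}/2$ with $q(K) \in \{1,2\}$.

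First I would enumerate the seven possible shapes of $d_{xy}$ with exactly three distinct prime factors:
\[
d_{xy} \in \{p_1p_2p_3,\ p_1p_2q_1,\ p_1q_1q_2,\ q_1q_2q_3,\ 2p_1p_2,\ 2p_1q_1,\ 2q_1q_2\},
\]
and, for each, read off the forced $2$-divisibility of $h_{K_3}$ from Corollary~\ref{C25}. In the four shapes $p_1p_2p_3$, $p_1p_2q_1$, $q_1q_2q_3$, and $2p_1p_2$ one finds $4 \mid h_{K_3}$, while in the remaining three shapes $p_1q_1q_2$, $2p_1q_1$, and $2q_1q_2$ only $2 \mid h_{K_3}$ is forced. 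For each of the four ``heavy'' shapes I would run through every way of partitioning the three primes of $d_{xy}$ between $x$ and $y$ (together with an optional square-free auxiliary $v$); in each partition Corollary~\ref{C24} provides at least one additional factor of $2$ in $h_{K_1}$ or $h_{K_2}$, so Theorem~\ref{2.1} already forces $h_K \geq 8$. This eliminates these four shapes from contributing to Table~\ref{tab3}.

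The remaining work is the detailed analysis for the three surviving shapes $d_{xy} \in \{p_1q_1q_2,\; 2p_1q_1,\; 2q_1q_2\}$. In each case I would enumerate, up to the symmetry $x \leftrightarrow y$, the essentially distinct ways of writing $(x,y)$ using the primes of $d_{xy}$ together with a square-free auxiliary factor $v$ coprime to $d_{xy}$. Since $h_K = 4$ combined with $2 \mid h_{K_3}$ forces $q(K)\,h_{K_1}h_{K_2} \leq 4$, Corollary~\ref{C24} restricts $v$ to have at most one prime factor, so $v \in \{1, 2, r\}$ for an odd prime $r$ whose residue modulo $4$ is further constrained in many subcases. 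For each surviving subcase I would read off the admissible tuples $[h_{K_1}, h_{K_2}, h_{K_3}, q(K)]$ directly from Theorem~\ref{2.1}, relabel the primes in the $p_i, q_j$ convention, and match the configuration against a row of Table~\ref{tab3}; care is needed to suppress duplicates arising from the $x \leftrightarrow y$ swap, exactly as in Case~(ii), Subcase~(b), II(A) of Proposition~\ref{P34}.

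The converse direction is immediate: substituting each listed tuple into Theorem~\ref{2.1} yields $h_K = 4$. The main obstacle is organizational rather than conceptual---the case split is wide, several congruence subcases are vacuous for non-obvious reasons, and identifying every $x \leftrightarrow y$ symmetry cleanly requires care. Following the same tabular bookkeeping employed in Propositions~\ref{P33} and~\ref{P34} keeps the argument manageable.
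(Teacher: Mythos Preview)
Your proposal is correct and follows essentially the same route as the paper: both proofs organize the case analysis around the residue pattern of the primes dividing $d_{xy}$, invoke Corollary~\ref{C25} to bound the $2$-part of $h_{K_3}$, use Corollary~\ref{C24} for $h_{K_1},h_{K_2}$, and feed everything into Theorem~\ref{2.1}. Two small points: your phrase ``at least one additional factor of $2$'' understates what is needed---in the four heavy shapes one actually obtains $h_{K_1}h_{K_2}\geq 4$ (either $4\mid h_{K_1}$ or $2\mid h_{K_1}$ and $2\mid h_{K_2}$), which together with $4\mid h_{K_3}$ gives $h_K\geq 8$; and the paper sharpens your $v\in\{1,2,r\}$ to $v=1$ immediately, since any extra prime in $v$ already pushes $h_{K_1}h_{K_2}\geq 8$ in the three surviving shapes.
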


 \begin{proof}
 Let $h_K =4$ and $d_{xy} = pqr $ or  $d_{xy} = 2pq $, where $p, q$ and $r$ are distinct odd primes. Applying Corollary \ref{C25} for the field $K_3 = \Q(\sqrt{d_{xy}})$, we find that  $2$ always divides $h_{K_3}$. The possibilities of $x$ and $y$, based on $d_{xy}$ and fields $K_1$  and $K_2$, are as follows: 
 \begin{itemize}
    \item  $x = pqrv $ and $y= v$,
    \item $x = pqv$ and $y = rv$,
     \item  $x = 2pqv $ and $y= v$,
      \item $x = pqv$ and $y = 2v$,
    \item $x = 2pv$ and $y = qv$,
   \end{itemize}
where $v$ is some square-free positive integer. Since $h_K = 4$ and $2 \mid h_{K_3}$,  Corollary \ref{C24} and Theorem \ref{2.1}  ensure that $v = 1$. Thus, we have: 
\begin{itemize}
    \item  $x = pqr $ and $y= 1$,
    \item $x = pq$ and $y = r$,
     \item  $x = 2pq $ and $y= 1$,
      \item $x = pq$ and $y = 2$,
    \item $x = 2p$ and $y = q$.
   \end{itemize}
We have $d_{xy} \equiv 1, 2, 3 \pmod 4$. We now consider each case of $d_{xy}$ along with each possible pair $(x, y)$. Note that for $d_{xy} \equiv 1, 3 \pmod 4$, the first two pairs $(x, y)$ are possible, whereas for  $d_{xy} \equiv 2 \pmod 4$ the last three pairs $(x, y)$ are possible.\\

\noindent
\textbf{Case (i):} Let $d_{xy} \equiv 1 \pmod 4  $. Based on this and the possible values of $(x, y)$ in our case, the following three subcases are sufficient:  $p, q, r \equiv 1 \pmod 4$; or $p, q  \equiv 3 \pmod 4$ and $r \equiv 1 \pmod 4$; or $p, r  \equiv 3 \pmod 4$ and $q \equiv 1 \pmod 4$.

\noindent
\textbf{Subcase (a):} Consider the situation where $p, q, r \equiv 1 \pmod 4$. In this case, from Corollary \ref{C25}, we have $4 \mid h_{K_3}$.\\
\noindent
\textbf{I:} Let $x = pqr $ and $y= 1$. In this case, by Corollary \ref{C24}, $4 \mid h_{K_1}$. Consequently, Theorem \ref{2.1} implies $h_K \neq 4$. This case is not possible.\\
\noindent
\textbf{II:}  Let $x = pq$ and $y = r$. In this case, by Corollary \ref{C24}, $4 \mid h_{K_1}$ and $2 \mid h_{K_2}$. Applying Theorem \ref{2.1}, we get $h_K \neq 4$. Thus, this case is also not possible.\\

\noindent
\textbf{Subcase (b):} Let $p, q  \equiv 3 \pmod 4$ and $r \equiv 1 \pmod 4$. We have $2 \mid h_{K_3}$.\\
\textbf{I:} Let $x = pqr $ and $y= 1$. 
Since $x$ has three distinct prime factors and $x \equiv 1 \pmod 4$, Corollary \ref{C24} implies that $8 \mid h_{K_1}$. Thus, Theorem \ref{2.1} guarantees that $h_K \neq 4$. Consequently, we discard this case. \\

\noindent
\textbf{II:}  Let $x = pq$ and $y = r$.
Here, $x$ has two distinct prime factors and $x \equiv 1 \pmod 4$. Using Corollary \ref{C24}, we have $4 \mid h_{K_1}$. Similar arguments show that $2 \mid h_{K_2}$.  Using these facts in Theorem \ref{2.1}, we have $h_K \neq 4$.\\

\noindent
\textbf{Subcase (c):} Let $p, r  \equiv 3 \pmod 4$ and $q \equiv 1 \pmod 4$. We have $2 \mid h_{K_3}$.\\
\textbf{I:} Let $x = pqr $ and $y = 1$. One can readily see that this case is not possible. \\

\noindent
\textbf{II:}  Let $x = pq$ and $y = r$. Corollary \ref{C24} implies that $2 \mid h_{K_1}$ and $h_{K_2}$ is odd. Thus, Theorem \ref{2.1} gives: $$[h_{K_1},\ h_{K_2},\ h_{K_3},\ q(K)] \in \{[2, 1, 2, 2], [4, 1, 2, 1],  [2, 1, 4, 1]\}. $$ Replacing $q, p, r$ with $p_1, q_1, q_2$, respectively, we obtain the first row of Table \ref{tab3}.\\

\noindent
\textbf{Case (ii):} Consider the case where $d_{xy} = pqr \equiv 3 \pmod 4  $. Since $\Q(\sqrt{-x})$ and $\Q(\sqrt{-y})$ are imaginary number fields, it suffices to consider the following three possible subcases for $(x, y)$:  $p, q, r \equiv 3 \pmod 4$; or $p, q  \equiv 1 \pmod 4$ and $r \equiv 3 \pmod 4$; or $p, r  \equiv 1 \pmod 4$ and $q \equiv 3 \pmod 4$. Also, note that Corollary \ref{C25} implies that $4 \mid h_{K_3}$.

\noindent
\textbf{Subcase (a):} Let $p, q, r \equiv 3 \pmod 4$.\\
\noindent
\textbf{I:} Let $x = pqr $ and $y= 1$.  By Corollary \ref{C24}, $4 \mid h_{K_1}$. Consequently, Theorem \ref{2.1} implies that $h_K \neq 4$. This case is not possible.\\
\noindent
\textbf{II:}  Let $x = pq$ and $y = r$.
Since $x$ has two distinct prime factors and $x \equiv 1 \pmod 4$, from Corollary \ref{C24} we have $4 \mid h_{K_1}$. Applying Theorem \ref{2.1}, we get $h_K \neq 4$. Thus, this case is also not possible.\\
\textbf{Subcase (b):} Let $p, q  \equiv 1 \pmod 4$ and $r \equiv 3 \pmod 4$. \\
\textbf{I:} Let $x = pqr $ and $y= 1$. It is easy to see that this case is not possible.\\
\textbf{II:}  Let $x = pq$ and $y = r$.\\
Here, $x$ has two distinct prime factors and $x \equiv 1 \pmod 4$. Using Corollary \ref{C24}, we have $4 \mid h_{K_1}$. Thus, applying Theorem \ref{2.1}, we conclude that this case is not possible.

\noindent
\textbf{Subcase (c):} Let $p, r \equiv 1 \pmod 4$ and $q \equiv 3 \pmod 4$. \\
\textbf{I:} Let $x = pqr $ and $y = 1$. By Corollary \ref{C24}, $4 \mid h_{K_1}$. Using this and the fact that $4 \mid h_{K_3}$ in Theorem \ref{2.1}, we get $h_{K} \neq 4$. Thus, we exclude this case.\\
\noindent
\textbf{II:}  Let $x = pq$ and $y = r$. By Corollary \ref{C24}, we have  $2 \mid h_{K_1}$  and $2 \mid h_{K_2}$. Also, we have already seen that $4 \mid h_{K_3}$. Thus, Theorem \ref{2.1} ensures that $h_K \neq 4$. Therefore, we eliminate this case.\\

\noindent
\textbf{Case (iii):} Consider $d_{xy} = 2pq$. This leads to the following subcases: either $p, q \equiv 1 \pmod 4$; or $p \equiv 1 \pmod 4$ and  $q \equiv 3 \pmod 4$; or $p \equiv 3 \pmod 4$ and  $q \equiv 1 \pmod 4$; or $p, q \equiv 3 \pmod 4$. Note that, by Corollary \ref{C25}, $2$ always divides $h_{K_3}$. \\
\textbf{Subcase (a):} Assume that $p, q \equiv 1 \pmod 4$. In this situation, Corollary \ref{C25} implies that $4 \mid h_{K_3}$. Thus, Theorem \ref{2.1} ensures that either $4$ divides $h_{K_1}$ or $h_{K_2}$, and $2$ divides both $h_{K_1}$ and $ h_{K_2}$, which leads to $h_K \neq 4$.\\
\textbf{I:} Consider $x = 2pq $ and $y= 1$.
Corollary \ref{C24} implies that $4 \mid h_{K_1}$. Thus, this case is not possible.\\
\textbf{II:}  Assume that $x = pq$  and  $y = 2$.  By Corollary \ref{C24}, we have $4 \mid h_{K_1}$. Thus, $h_K \neq 4$. Therefore, this case is ruled out.\\
\textbf{III:} Consider $x = 2p$ and $y = q$. In this case, Corollary \ref{C24} implies that $2 \mid h_{K_1}$ and $2 \mid h_{K_2}$. Thus, this case is also not possible.\\

\noindent
\textbf{Subcase (b):} Assume that $p\equiv 1 \pmod 4$ and  $q\equiv 3\pmod 4$.  By Corollary \ref{C25}, we have $2 \mid h_{K_3}$.\\ 
\textbf{I:} Consider $x = 2pq $ and $y= 1$. Then, Corollary \ref{C24} implies that $4 \mid h_{K_1}$, and it is known that the class number of $\Q(\sqrt{-1})$ is $1$. Thus, applying Theorem \ref{2.1}, we have $[h_{K_1},\ h_{K_2},\ h_{K_3},\ q(K)] = [4, 1, 2, 1]$. Using $p_1$ and $q_1$ for $p$ and $q$, respectively, we obtain the $2$nd row of Table \ref{tab3}.\\
\textbf{II:}Assume that $x = pq$  and  $y = 2$.
By Corollary \ref{C24}, we have $2\mid h_{K_1}$, and it is easy to see that $h_{K_2} = 1$. Thus, Theorem \ref{2.1} implies that $$[h_{K_1},\ h_{K_2},\ h_{K_3},\ q(K)] \in \{[2,\ 1,\ 2,\ 2],\ [4,\ 1,\ 2,\ 1],\ [2,\ 1,\ 4,\ 1]\}.$$ Replacing $p$ and $q$ with $p_1$ and $q_1$, respectively, we obtain the $3$rd row of Table \ref{tab3}.\\
\textbf{III:} Consider $x = 2p$ and $y = q$. Since $2 \mid h_{K_3}$,
  applying Corollary \ref{C24} and Theorem \ref{2.1} we find that $[h_{K_1},\ h_{K_2},\ h_{K_3},\ q(K)] \in \{[2, 1, 2, 2], [4, 1, 2, 1], [2, 1, 4, 1]\}$. Thus,  by taking $p_1$ and $q_1$ for $p$ and $q$, respectively, we obtain the $4$th row of Table \ref{tab3}.\\

\noindent
\textbf{Subcase (c):} Assume that $p\equiv 3 \pmod 4$ and  $q\equiv 1\pmod 4$.  By Corollary \ref{C25}, we have $2 \mid h_{K_3}$. \\
\textbf{I:} Consider $x = 2pq $ and $y= 1$. Corollary \ref{C24} implies that $4 \mid h_{K_1}$, and we know that $h_{K_2} = 1$. Thus, applying Theorem \ref{2.1}, we obtain that $[h_{K_1},\ h_{K_2},\ h_{K_3},\ q(K)] = [4, 1, 2, 1]$. Therefore, using $p_1$ and $q_1$ for $q$ and $p$, respectively, we again have the $2$nd row of Table \ref{tab3}.\\
\textbf{II:}  Assume that $x = pq$  and  $y = 2$.  We know that $h_{K_2} = 1$, and  Corollary \ref{C24} implies that $2\mid h_{K_1}$. Thus, Theorem \ref{2.1} implies that $$[h_{K_1},\ h_{K_2},\ h_{K_3},\ q(K)] \in \{[2,\ 1,\ 2,\ 2],\ [4,\ 1,\ 2,\ 1],\ [2,\ 1,\ 4,\ 1]\}.$$ Replacing $p$ and $q$ with $q_1$ and $p_1$, respectively, we again obtain the $4$th row of Table \ref{tab3}.\\
\textbf{III:} Consider $x = 2p$ and $y = q$.  By Corollary \ref{C24}, we have $2 \mid h_{K_1}$ and $2 \mid h_{K_2}$. Applying Theorem \ref{2.1}, we have $[h_{K_1},\ h_{K_2},\ h_{K_3},\ q(K)] = [2, 2, 2, 1] $. Thus, by taking $p_1$ and $q_1$ for $q$ and $p$, respectively, we obtain the $5$th row of Table \ref{tab3}.\\
\noindent
\textbf{Subcase (d):} Assume that $p\equiv 3 \pmod 4$ and  $q\equiv 3\pmod 4$.  By Corollary \ref{C25}, we have $2 \mid h_{K_3}$.\\ 
\textbf{I:} Consider $x = 2pq $ and $y= 1$. Corollary \ref{C24} implies that $4 \mid h_{K_1}$. We know that $h_{K_2} = 1$. Thus, applying Theorem \ref{2.1}, we obtain that $[h_{K_1},\ h_{K_2},\ h_{K_3},\ q(K)] = [4, 1, 2, 1]$. Therefore, using $q_1$ and $q_2$ for $p$ and $q$, respectively, we obtain the $6$th row of Table \ref{tab3}.\\
\textbf{II:}  Consider $x = pq$  and  $y = 2$. By Corollary \ref{C24}, we have $4\mid h_{K_1}$, and the class number of $\Q(\sqrt{-2})$ is $1$.  Thus, Theorem \ref{2.1} implies that the only possibility for the tuple $[h_{K_1},\ h_{K_2},\ h_{K_3},\ q(K)]$ is $[4,\ 1,\ 2,\ 1]$. Relabeling $p$ and $q$ with $q_1$ and $q_2$, respectively, we obtain the $7$th row of Table \ref{tab3}.\\
\textbf{III:} Let $x = 2p$ and $y = q$.
By Corollary \ref{C24}, we have $2 \mid h_{K_1}$  and $h_{K_2}$ is odd. Applying Theorem \ref{2.1}, we have $$[h_{K_1},\ h_{K_2},\ h_{K_3},\ q(K)] \in \{[2,\ 1,\ 2,\ 2],\ [4,\ 1,\ 2,\ 1],\ [2,\ 1,\ 4,\ 1]\}.$$ Thus, taking $q_1$ and $q_2$ for $p$ and $q$, respectively, we obtain the $8$th row of Table \ref{tab3}.\\

Conversely, let $K$ be as listed in Table \ref{tab3} with the mentioned values of $[h_{K_1}, h_{K_2}, h_{K_3}, q(K)]$. Then, by applying Theorem \ref{2.1}, we get $h_K = 4$. This completes the proof.
\end{proof} 
\begin{proof}[Proof of Theorem \ref{T4}] We explain here our method for listing such fields $K$ using Table \ref{tab1} and \texttt{SageMath} without bothering for the value of $q(K)$.  Consider the first row of Table \ref{tab1}. We have $K = \Q(\sqrt{-p_1p_2q_1}, \sqrt{-p_2q_1})$ and $[h_{K_1}, h_{K_2}, h_{K_3}, q(K)]= [4, 2, 1, 1]$.
Now, consider  two imaginary quadratic subfields $K_1 = \Q(\sqrt{-p_1p_2q_1})$ and $K_2 = \Q(\sqrt{-p_2q_1})$ of $K$. Choose $x =p_1p_2q_1$ and $y =p_2q_1$ from the table presented in Section $10$ such that $h_{K_1} =4$ and $h_{K_2} =2$. Then, list all the fields $K =\Q(\sqrt{-x}, \sqrt{-y})$ for which $h_K =4$ and $h_{K_3} = 1$ using \texttt{SageMath} by providing the input pair $(x, y)$. Note that if both $(x_1, y_1)$ and $(y_1, x_1)$ appear as possible values  of $(x, y)$ for a fixed tuple $[h_{K_1}, h_{K_2}, h_{K_3}, q(K)]$, then we must have $h_{K_1} = h_{K_2}$. In this case, $\Q(\sqrt{-x_1}, \sqrt{-y_1}) = \Q(\sqrt{-y_1}, \sqrt{-x_1})$, so only one of the pairs, either $(x_1, y_1)$ or $(y_1, x_1)$, should be counted.\\

Note that during this process, we have not used the value of $q(K)$ at all. We only fixed the tuple $[h_{K_1}, h_{K_2}, h_{K_3}, q(K)]$, as it can have more than one value, such as in the 5th row of Table \ref{tab1}. Repeat this process for each case in each row. Apply the same procedure to the next table as well. This method is used in all cases to obtain a complete list of such fields $K$.\\

Using Table \ref{tab1} and \texttt{SageMath} as explained above, we obtain $67$ fields $K$ with $h_K =4$, of which $52$ fields have cyclic class groups. These fields are presented in Table \ref{tab8} of Section $7$. Applying the same procedure to Table \ref{tab2} yields $180$  such fields with $128$ having cyclic class groups, as shown in Table \ref{tab9}.  Similarly, using \texttt{SageMath} for Table \ref{tab3}, we obtain exactly $161$  such fields, of which $116$ fields have cyclic class groups. These fields are listed in Table \ref{tab10} of Section $7$.  Thus, in total, there are exactly 408 fields $K$ with $ h_K = 4$, of which $296$ have cyclic class groups. This completes the proof.
\end{proof}
 \section{Proof of Theorem \ref{T6}} The first result we prove in this section is the following result.
\begin{prop} \label{P41} Let $K = \Q(\sqrt{-x}, \sqrt{-y})$ be an imaginary bicyclic biquadratic number field. If $d_{xy}$ has at least four distinct prime factors, then $h_K \neq 6$.
\end{prop}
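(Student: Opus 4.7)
The plan is to observe that, unlike the analogous Proposition~\ref{P31} for class number $4$, the present statement admits a very short proof once one pins down the $2$-part of $h_{K_3}$. Since the target value $6$ is not divisible by $4$, it suffices to show $4 \mid h_K$, and this will follow immediately from Proposition~\ref{P21} together with $4 \mid h_{K_3}$.

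Concretely, I would proceed as follows. Let $K_3 = \Q(\sqrt{d_{xy}})$ denote the (unique) real quadratic subfield of $K$. By hypothesis, $d_{xy}$ is a positive square-free integer with at least four distinct prime divisors. I would apply Corollary~\ref{C25} to $K_3$ with $s = 2$: the first bullet there requires $d_{xy}$ to have at least $s+2 = 4$ distinct prime factors, which is precisely our hypothesis. Hence $4 \mid h_{K_3}$. By Proposition~\ref{P21}, $h_{K_3} \mid h_K$, so $4 \mid h_K$, and therefore $h_K \neq 6$.

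No further obstacle arises, and in particular no case analysis on the shape of $(x, y)$ is required. In the analogue for $h_K = 4$ (Proposition~\ref{P31}), the divisibility $4 \mid h_{K_3}$ is compatible with the target class number, which forced the authors into a careful case split on the factorisations of $x$ and $y$, invoking Theorem~\ref{2.1} and Corollary~\ref{C24}. Here, by contrast, the arithmetic incompatibility $4 \nmid 6$ shortcuts all such case work, and the proof reduces to a single application of genus theory.
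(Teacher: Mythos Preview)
Your proof is correct and matches the paper's own argument essentially line for line: both apply Corollary~\ref{C25} to get $4 \mid h_{K_3}$ from the hypothesis of at least four prime factors, then invoke Proposition~\ref{P21} to conclude $4 \mid h_K$, whence $h_K \neq 6$. Your added remark contrasting this with the case-heavy proof of Proposition~\ref{P31} is accurate and well observed.
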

\begin{proof}
Here, $K_1 = \Q(\sqrt{-x})$, $K_2 = \Q(\sqrt{-y})$ and $K_3 = \Q(\sqrt{d_{xy}})$. Since $d_{xy}$ has at least four distinct prime factors, Corollary \ref{C25} implies that $4$ divides $h_{K_3}$. Consequently, by Proposition \ref{P21}, we have $4 \mid h_{K}$, and hence $h_K \neq 6$. This completes the proof.

\end{proof}
Proposition \ref{P41} guarantees that if $h_K =6$, then $d_{xy}$ has at most three prime factors. Using arguments similar to those used for $h_K = 4$ in the previous section, one can prove the following result. However, for completeness, we provide the full proof.
\begin{prop} \label{P42} Let \( K = \mathbb{Q}(\sqrt{-x}, \sqrt{-y}) \) be an imaginary bicyclic biquadratic number field. 
\begin{enumerate}
    \item Assume that $d_{xy}$ is a prime. Then, $h_K = 6$ if and only if $K$ appears in the table below: \vspace{0.5cm}
 \renewcommand{\arraystretch}{1.5}
\begin{center}
 \begin{tabular}{|c|c|m{20.1em}|}\hline
\# & \text{Field } $K$ & $[h_{K_1},\ h_{K_2},\ h_{K_3},\ q(K)]$ \\
\hline
$1$ & $\mathbb{Q}(\sqrt{-p_1q_1}, \sqrt{-q_1})$ & $[2,\ 3,\ 1,\ 2],\ [2,\ 1,\ 3,\ 2],\ [6,\ 1,\ 1,\ 2],$ $ [4,\ 3,\ 1,\ 1],\ [4,\ 1,\ 3,\ 1],\ [12,\ 1,\ 1,\ 1]$\\
\hline
$2$ & $\mathbb{Q}(\sqrt{-2p_1}, \sqrt{-2})$ & $[2,\ 1,\ 3,\ 2],\ [6,\ 1,\ 1,\ 2],\ [4,\ 1,\ 3,\ 1],$ $\ [12,\ 1,\ 1,\ 1]$\\
\hline
$3$ & $\mathbb{Q}(\sqrt{-p_1}, \sqrt{-1})$ & $[2,\ 1,\ 3,\ 2],\ [6,\ 1,\ 1,\ 2],\ [4,\ 1,\ 3,\ 1],$ $\ [12,\ 1,\ 1,\ 1]$\\
\hline

$4$ & $\mathbb{Q}(\sqrt{-p_1q_1}, \sqrt{-p_1})$ & $[6,\ 2,\ 1,\ 1],\ [2,\ 6,\ 1,\ 1],\ [2,\ 2,\ 3,\ 1]$\\
\hline
$5$ & $\mathbb{Q}(\sqrt{-q_1q_2}, \sqrt{-q_1})$& $[4,\ 3,\ 1,\ 1],\ [4,\ 1,\ 3,\ 1],\ [12,\ 1,\ 1,\ 1]$\\
\hline
$6$ & $\mathbb{Q}(\sqrt{-2q_1}, \sqrt{-2})$ & $[2,\ 1,\ 3,\ 2],\ [6,\ 1,\ 1,\ 2],\ [4,\ 1,\ 3,\ 1],$ $\ [12,\ 1,\ 1,\ 1]$\\
\hline
$7$ & $\mathbb{Q}(\sqrt{-2p_1}, \sqrt{-p_1})$ & $ [6,\ 2,\ 1,\ 1], \ [2,\ 6,\ 1,\ 1]$\\
\hline
$8$ & $\mathbb{Q}(\sqrt{-2q_1}, \sqrt{-q_1})$ & $[2,\ 3,\ 1,\ 2],\ [6,\ 1,\ 1,\ 2],\ [4,\ 3,\ 1,\ 1],$ $\ [12,\ 1,\ 1,\ 1]$\\
\hline
\end{tabular}
\captionof{table}{}
\label{tab4}
\end{center}
\vspace{0.6cm}
\item Assume that $d_{xy}$ has exactly two distinct prime factors. Then, $h_K = 6$ if and only if $K$ appears in the table below:
\begin{center}
 \begin{longtable*}{|c|c|m{19.1em}|}  \hline
\# & \text{Field } $K$ & $[h_{K_1},\ h_{K_2},\ h_{K_3},\ q(K)]$ \\
\hline 
$1$ & $\mathbb{Q}(\sqrt{-q_1q_2q_3}, \sqrt{-q_1})$ & $[4,\ 1,\ 3,\ 1],\ [4,\ 3,\ 1,\ 1],\ [12,\ 1,\ 1,\ 1]$\\
\hline
$2$ & $\mathbb{Q}(\sqrt{-2q_1q_2}, \sqrt{-2})$ & $[4,\ 1,\ 3,\ 1],\ [12,\ 1,\ 1,\ 1]$\\
\hline
$3$ & $\mathbb{Q}(\sqrt{-q_1q_2}, \sqrt{-1})$ & $[4,\ 1,\ 3,\ 1],\ [12,\ 1,\ 1,\ 1]$\\
\hline
$4$ & $\mathbb{Q}(\sqrt{-p_1q_1}, \sqrt{-p_1q_2})$ & $[2,\ 6,\ 1,\ 1],\ [2,\ 2,\ 3,\ 1]$\\
\hline
$5$ & $\mathbb{Q}(\sqrt{-2q_1}, \sqrt{-2q_2})$ & $[2,\ 2,\ 3,\ 1],\ [6,\ 2,\ 1,\ 1]$\\
\hline
$6$ & $\mathbb{Q}(\sqrt{-p_1q_1}, \sqrt{-1})$ & $[6,\ 1,\ 2,\ 1],\ [2,\ 1,\ 6,\ 1]$\\
\hline
$7$ & $\mathbb{Q}(\sqrt{-p_1}, \sqrt{-q_1})$ & $[6,\ 1,\ 2,\ 1],\ [2,\ 1,\ 6,\ 1],\ [2,\ 3,\ 2,\ 1]$\\
\hline
$8$ & $\mathbb{Q}(\sqrt{-2p_1}, \sqrt{-1})$ & $[6,\ 1,\ 2,\ 1],\ [2,\ 1,\ 6,\ 1]$\\
\hline
$9$ & $\mathbb{Q}(\sqrt{-p_1}, \sqrt{-2})$ & $[6,\ 1,\ 2,\ 1],\ [2,\ 1,\ 6,\ 1]$\\
\hline
$10$ & $\mathbb{Q}(\sqrt{-2q_1q_2}, \sqrt{-q_1})$ & $[4,\ 3,\ 1,\ 1],\ [4,\ 1,\ 3,\ 1],\ [12,\ 1,\ 1,\ 1]$\\
\hline
$11$ & $\mathbb{Q}(\sqrt{-2q_1}, \sqrt{-1})$ & $[2,\ 1,\ 3,\ 2],\ [6,\ 1,\ 1,\ 2],\ [4,\ 1,\ 3,\ 1],$ $\ [12,\ 1,\ 1,\ 1]$\\
\hline
$12$ & $\mathbb{Q}(\sqrt{-p_1q_1}, \sqrt{-2p_1})$ & $[6,\ 2,\ 1,\ 1],\ [2,\ 6,\ 1,\ 1],\ [2,\ 2,\ 3,\ 1]$\\
\hline
\end{longtable*}
\captionof{table}{}
\label{tab5}
\end{center}
\item Assume that $d_{xy}$ has exactly three distinct prime factors. Then, $h_K = 6$ if and only if $K$ is one of the fields listed in the table below:
\begin{center}
 $$
\begin{array}{|c|l|c|}
\hline
\# & \text{Field } K & [h_{K_1},\ h_{K_2},\ h_{K_3},\ q(K)] \\
\hline
1 & \mathbb{Q}(\sqrt{-p_1q_1}, \sqrt{-q_2}) & [2,\ 3,\ 2,\ 1],\ [6,\ 1,\ 2,\ 1],\ [2,\ 1,\ 6,\ 1]\\
\hline

2 & \mathbb{Q}(\sqrt{-p_1q_1}, \sqrt{-2}) & [6,\ 1,\ 2,\ 1],\ [2,\ 1,\ 6,\ 1]\\
\hline
3 & \mathbb{Q}(\sqrt{-2p_1}, \sqrt{-q_1}) & [6,\ 1,\ 2,\ 1],\ [2,\ 1,\ 6,\ 1],\ [2,\ 3,\ 2,\ 1]\\
\hline
4 & \mathbb{Q}(\sqrt{-2q_1}, \sqrt{-q_2}) & [2,\ 3,\ 2,\ 1],\ [6,\ 1,\ 2,\ 1],\ [2,\ 1,\ 6,\ 1]\\
\hline
\end{array}
$$
\captionof{table}{}
\label{tab6}
\end{center}
\end{enumerate}
\end{prop}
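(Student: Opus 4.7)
The plan is to adapt the case analysis carried out for $h_K = 4$ in Propositions~\ref{P33},~\ref{P34}, and~\ref{P36} to the target class number $h_K = 6$. The starting point is Theorem~\ref{2.1}: if $h_K = 6$, then $q(K)\, h_{K_1} h_{K_2} h_{K_3} = 12$, so $h_{K_1} h_{K_2} h_{K_3} \in \{6, 12\}$. In particular $v_2(h_{K_1}) + v_2(h_{K_2}) + v_2(h_{K_3}) \le 2$, and $3$ must divide at least one of the factors $h_{K_i}$. Proposition~\ref{P41} has already ruled out the case in which $d_{xy}$ has four or more prime divisors, so only the three regimes in the statement survive.

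For each of the three parts (1), (2), (3), I would parametrize $x$ and $y$ in terms of a square-free positive integer $v$, so that $d_{xy}$ has the prescribed number of prime factors. This produces the familiar finite list of shapes: each prime of $d_{xy}$ lies either in $x$, in $y$, or equals $2$, together with a common square-free divisor $v$ of $x$ and $y$. The admissible values of $v$ are then split further according to the residues modulo $4$ of the primes involved, as in Propositions~\ref{P33}--\ref{P36}.

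In each subcase I would apply Corollaries~\ref{C24} and~\ref{C25} to bound the $2$-ranks of $h_{K_1}$, $h_{K_2}$, and $h_{K_3}$ from below. Any subcase which forces $v_2(h_{K_1}) + v_2(h_{K_2}) + v_2(h_{K_3}) \ge 3$ is discarded by Proposition~\ref{P21} or Theorem~\ref{2.1}. Subcases compatible with the target $2$-adic sum yield a short list of admissible tuples $[h_{K_1}, h_{K_2}, h_{K_3}, q(K)]$ with $q(K) \in \{1, 2\}$ and $3$ dividing some $h_{K_i}$; these populate Tables~\ref{tab4},~\ref{tab5}, and~\ref{tab6}. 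Whenever both ordered pairs $(x,y)$ and $(y,x)$ appear as admissible representations of the same field (which forces $h_{K_1} = h_{K_2}$), I would retain a single canonical representative, mirroring the convention used in Subcase~(b),~II(A) of Proposition~\ref{P34}.

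The main obstacle is that Corollaries~\ref{C24} and~\ref{C25} control only the $2$-part of the class numbers, whereas $h_K = 6$ also demands a factor of $3$ in some $h_{K_i}$; genus theory alone cannot force or forbid this $3$-divisibility. At this stage I would therefore list every tuple consistent with the $2$-adic analysis and having $3 \mid h_{K_i}$ for at least one $i$, and defer the actual enumeration of fields to the \texttt{SageMath} pass described in the proof of Theorem~\ref{T4}. The converse direction of each part is then a one-line check: inserting any tuple from Tables~\ref{tab4}--\ref{tab6} into Theorem~\ref{2.1} yields $h_K = 6$, which closes the ``if and only if''.
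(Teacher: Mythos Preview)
Your proposal is correct and follows essentially the same route as the paper: parametrize $(x,y)$ by a square-free $v$, split into subcases by the residues of the primes modulo $4$, use Corollaries~\ref{C24} and~\ref{C25} to bound the $2$-parts of the $h_{K_i}$, discard any subcase forcing $4\mid h_K$, and then list the tuples $[h_{K_1},h_{K_2},h_{K_3},q(K)]$ compatible with $q(K)\,h_{K_1}h_{K_2}h_{K_3}=12$. The paper simply executes this case analysis in full rather than leaving it as a plan, and handles the converse exactly as you do via Theorem~\ref{2.1}.
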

\begin{proof}[Proof of Proposition \ref{P42}] We first observe that if $K$ appears in Tables~\ref{tab4}, \ref{tab5}, or \ref{tab6} as presented in Proposition~\ref{P42}, then by Theorem~\ref{2.1}, it follows that $h_{K} = 6$. We now proceed to demonstrate, case by case, that these tables contain all possible tuples $[h_{K_1},\ h_{K_2},\ h_{K_3},\ q(K)]$ for which $h_K = 6$.\\

\noindent
\textbf{(1).} Suppose $h_K = 6$ and $d_{xy} = p$ or $d_{xy} = 2$, where $p$ is an odd prime. Then the possible choices for $(x, y)$ are given by either $x = p v$, $y = v$, or $x = 2v$, $y = v$, where $v$ is a square-free positive integer.  Corollary \ref{C25} confirms that $h_{K_3}$ is always odd. As $h_K = 6$, from Corollary \ref{C24} and Theorem \ref{2.1}, it follows that $v$ cannot have more than one prime factor. Thus, the value of $v$ can be an odd prime $q$ or equal to $2$ or $1$. However, for $d_{xy} = 2$, we have $v \neq 2$.\\
\noindent
\textbf{Case (i):} Let $d_{xy} = p \equiv 1 \pmod 4$, and
consider $x = pv$ and $y = v$.\\
\textbf{A:} Assume that $v = q \equiv 1 \pmod 4$. By Corollary \ref{C24}, we have $4 \mid h_{K_1}$ and $2 \mid h_{K_2}$. Thus, Theorem \ref{2.1} implies that $h_K \neq 6$.\\
\textbf{B:} Consider $v = q \equiv 3 \pmod 4$. Corollary \ref{C24} implies that $2 \mid h_{K_1}$ and  $h_{K_2}$ is odd. Therefore, Theorem \ref{2.1} gives: \[
[h_{K_1},\ h_{K_2},\ h_{K_3},\ q(K)] \in\left\{
\begin{aligned}
&[2, 3, 1, 2],\ [2, 1, 3, 2],\ [6, 1, 1, 2],\\
&[4, 3, 1, 1],\ [4, 1, 3, 1],\ [12, 1, 1, 1]
\end{aligned} \right\}.
\]
This establishes the first row of Table \ref{tab4} by substituting $p_1$ and $q_1$ for $p$ and $q$, respectively.\\
\textbf{C:} Let $v = 2$. Then, $x = 2p$ and $y = 2$. We have $h_{K_2} = 1$, and by Corollary \ref{C24}  we see that  $2 \mid h_{K_1}$. Thus, Theorem \ref{2.1} implies that $$[h_{K_1},\ h_{K_2},\ h_{K_3},\ q(K)] \in \{[2,\ 1,\ 3,\ 2],\ [6,\ 1,\ 1,\ 2],\ [4,\ 1,\ 3,\ 1], \ [12,\ 1,\ 1,\ 1]\}.$$ This gives the second row of Table \ref{tab4} by substituting $p_1$  for $p$.\\
\textbf{D:} Let $v = 1$. Then, $x = p$ and $y = 1$. Then, $h_{K_2} = 1$, and  Corollary \ref{C24} implies that $2 \mid h_{K_1}$. Thus, applying Theorem \ref{2.1} and substituting $p_1$  for $p$, we obtain the $3$rd row of Table \ref{tab4}.\\

\noindent
\noindent
\textbf{Case (ii):} Let $d_{xy} = p \equiv 3 \pmod{4}$, with $x = p v$ and $y = v$.\\
\textbf{A:} Let $v = q \equiv 1 \pmod 4$. We have $x = pq$ and $y = q$. By Corollary \ref{C24}, $2 \mid h_{K_1}$ and $2 \mid h_{K_2}$. Therefore, Theorem \ref{2.1} guarantees that  $$[h_{K_1},\ h_{K_2},\ h_{K_3},\ q(K)] \in \{[2, 2, 3, 1], [6, 2, 1, 1],  [2,  6,  1, 1]\}.$$
Thus, relabeling $p$ and $q$ with $q_1$ and $p_1$, respectively, yield the $4$th row of Table \ref{tab4}.\\
\textbf{B:} Consider $v = q \equiv 3 \pmod 4$. From Corollary \ref{C24}, it follows that $4\mid h_{K_1}$ and $h_{K_2}$ is odd. Thus applying Theorem \ref{2.1} and taking $q = q_1$, $p = q_2$, we obtain the $5$th row of Table \ref{tab4}.\\
\textbf{C:} Let $v = 2$. Then, $x = 2p$ and $y = 2$. We know that $h_{K_2} = 1$. By Corollary \ref{C24}, we have $2 \mid h_{K_1}$. Thus, from Theorem \ref{2.1} and by setting $p = q_1$,  we obtain the $6$th row of Table \ref{tab4}.\\
\textbf{D:} Let $v = 1$. Then, $x = p$ and $y = 1$. We have $h_{K_2} = 1$. Corollary \ref{C24} implies that $h_{K_1}$ is odd. Thus, Theorem \ref{2.1} ensures that $h_K = 6$ only if $q(K) >2$, which is not possible. Hence, we discard this situation.\\

\noindent
\textbf{Case (iii)}: Consider $d_{xy} = 2$, and let $x = 2v$ and $y = v$. We have $h_{K_3} = 1$.\\
\textbf{A:} Assume that $v = q \equiv 1 \pmod 4$. Then, $x = 2q$ and $y = q \equiv 1\pmod 4$. From Corollary \ref{C24}, we obtain that $2 \mid h_{K_1}$ and $2 \mid h_{K_2}$. Therefore, applying Theorem \ref{2.1} and substituting $p_1$ for $q$ yields the $7$th row of Table \ref{tab4}.\\
\textbf{B:} Suppose  $v = q \equiv 3 \pmod 4$. Then, $x = 2q$ and $y = q \equiv 3\pmod 4$. From Corollary \ref{C24}, we obtain that $2 \mid h_{K_1}$ and $ h_{K_2}$ is odd. Thus, applying Theorem \ref{2.1} and substituting $q_1$ for $q$ yields the $8$th row of Table \ref{tab4}.\\
\textbf{C:} Let $v = 1$. Then, $x = 2$ and $y =1$. We have $h_{K_i} = 1$ for $i = 1, 2, 3.$ Using these values in Theorem \ref{2.1} to obtain $h_K = 6$, we conclude that $q(K) > 2$. This is not possible, and thus we exclude this case. This completes the proof of Case (1).\\

\noindent
\textbf{(2).} Let $h_K = 6$ and  $d_{xy} = pq$ or $d_{xy} = 2p$, where $p$ and $q$ are odd primes. Since $K_1$ and $K_2$ are imaginary quadratic fields,  the possible choices of $x$ and $y$ for $d_{xy} = pq$ are as follows:
\begin{itemize}
    \item $x = pqv$ and $y = v$,
    \item $x = pv$ and $y = qv$,
\end{itemize}
and for $d_{xy} = 2p$ we have 
\begin{itemize}
    \item $x = 2pv$ and $y = v$,
    \item $x = pv$ and $y = 2v$,
\end{itemize}
where $v$ is a square-free positive integer. Since $h_K = 6$, it follows from Corollary~\ref{C24} and Theorem~\ref{2.1} that $v$ must be either an odd prime $r$, or equal to $2$ or $1$. However, in the case $d_{xy} = 2p$, we have $v \neq 2$.\\
\textbf{Case (i):} Let $d_{xy} = pq\equiv 1 \pmod 4$. This leads to two subcases: $p, q \equiv 1 \pmod 4$ or $p, q \equiv 3 \pmod 4$. Corollary \ref{C24}, Corollary \ref{C25} and Theorem \ref{2.1} imply that the case $p, q \equiv 1 \pmod 4$ gives $h_{K} \neq 6$. Thus, we exclude this case.\\

Consider the case where $p \equiv 3 \pmod 4$ and $q \equiv 3 \pmod 4$.  Then, by Corollary~\ref{C25}, it follows that $h_{K_3}$ is odd.\\
\textbf{I:} Assume that  $x = pqv$ and $y = v$.\\
\textbf{I(A):} Let $v = r \equiv 1 \pmod 4$ be a prime. Then, $x = pqr$ and $y = r$. Using Corollary \ref{C24} and Theorem \ref{2.1}, we exclude this case.\\
\textbf{I(B):} Let $v = r \equiv 3 \pmod 4$ be a prime. Then, $x = pqr$ and $y = r$. Set $q_1 = r$, $q_2 = q$ and $q_3 = p$. By Corollary \ref{C24}, we see that $4 \mid h_{K_1}$ and $h_{K_2}$ is odd. Thus, the first row of Table \ref{tab5} follows from Theorem \ref{2.1}. \\
\textbf{I(C):} Let $v = 2$. Then, $x = 2pq$ and $y = 2$. Set $q_1 = p$ and $q_2 = q$. We have $h_{K_2} = 1$, and Corollary \ref{C24} implies that $4 \mid h_{K_1}$. Thus, applying Theorem \ref{2.1}, we obtain the second row of Table \ref{tab5}.\\
\textbf{I(D):} Let $v = 1$. Then, $x = pq$ and $y = 1$. Set $q_1 = p$ and $q_2 = q$. This case gives the $3$rd row of Table \ref{tab5}.\\

\noindent
\textbf{II:} Let $x = pv$ and $y = qv$. \\
\textbf{II(A):} Let $v = r \equiv 1 \pmod 4$ be a prime. Then, $x = pr$ and $y = qr$. Set $p_1 = r$, $q_1 = p$ and $q_2 = q$. By Corollary \ref{C24}, we have  $2 \mid h_{K_1}$ and $2 \mid h_{K_2}$. Thus, Theorem \ref{2.1} gives:  $$[h_{K_1},\ h_{K_2},\ h_{K_3},\ q(K)] \in \{[2, 2, 3, 1], [6, 2, 1, 1], [2,  6,  1, 1]\}.$$ 

Consider the tuples $[6, 2, 1, 1]$ and $ [2,  6,  1, 1]$. By reversing the order of the chosen pairs $(x, y)$ of the form $(p_1q_1, p_1q_2)$, where $h_{K_1} = 6$ and $h_{K_2} = 2$, we obtain pairs of the form $(p_1q_2, p_1q_1)$, for which $h_{K_1} = 2$ and $h_{K_2} = 6$, and vice versa. Thus, the fields $K$ obtained in both cases correspond to the same list. Hence, we consider $[h_{K_1},\ h_{K_2},\ h_{K_3},\ q(K)] \in \{[2, 2, 3, 1], [2, 6, 1, 1]\}$, which yield the $4$-th row of Table~\ref{tab5}.
.\\
\textbf{II(B):} Let $v = r \equiv 3 \pmod 4$ be a prime. Then, $x = pr$ and $y = qr$.  By Corollary \ref{C24}, we see that $4 \mid h_{K_1}$ and $4 \mid h_{K_2}$. Therefore,  Theorem \ref{2.1} implies that $h_K \neq 6$. Thus, we discard this case.\\
\textbf{II(C):} Let $v = 2$. Then, $x = 2p$ and $y = 2q$. Set $q_1 = p$ and $q_2 = q$.
By Corollary \ref{C24}, $2 \mid h_{K_1}$ and $2 \mid h_{K_2}$. Now, the $5$th row of Table \ref{tab5} follows from Theorem \ref{2.1}. \\
\textbf{II(D):} Let $v = 1$. Then, $x = p$ and $y = q$. Set $q_1 = p$ and $q_2 = q$. Corollary \ref{C24} implies that $h_{K_1}$ and $h_{K_2}$ are odd, and we have seen that $h_{K_3}$ is also odd. Thus, for  $h_K = 6$, Theorem \ref{2.1} implies that $q(K)> 2$. This is not possible; therefore, we exclude this case.
\\
\textbf{Case (ii):} Let $d_{xy} = pq\equiv 3 \pmod 4$. This leads to two subcases: either $p\equiv 1 \pmod 4$ and  $q \equiv 3 \pmod 4$ or $p\equiv 3 \pmod 4$ and  $q \equiv 1 \pmod 4$. However, in our case, considering one subcase is sufficient. So, we assume the subcase $p\equiv 1 \pmod 4$ and  $q \equiv 3 \pmod 4$. 

Using Corollary \ref{C25}, we have $2 \mid h_{K_3}$. Now, Corollary \ref{C24} and Theorem \ref{2.1} ensure that, to obtain $h_{K} = 6$, we must have $v = 1$ in both  cases: either $(x,y) = (pqv,v)$ or $(x, y) = (pv, qv)$. Set $p_1 = p$ and $q_1 = q$. Thus, the case $x = p_1q_1$ and $y = 1$ gives the $6$th row of Table \ref{tab5}, whereas the case $x = p_1$ and $y = q_1$ yields the $7$th row of  Table \ref{tab5}.\\

\noindent
\textbf{Case (iii):} Let $d_{xy} = 2p$. The odd prime $p$ can satisfy $p \equiv 1 \pmod 4$ or $p \equiv 3 \pmod 4$. \\
\textbf{Subcase (a)} Let $p \equiv 1 \pmod 4$. Then, by Corollary \ref{C25}, we have $2 \mid h_{K_3}$. Applying Corollary \ref{C24} and Theorem \ref{2.1} to obtain $h_{K} = 6$, we conclude that $v = 1$ in both  cases: $x = 2pv$ and $y = v$, or $x = pv$ and $y = 2v$. Set $p_1 = p$. Then, the case $x = 2p_1$ and $y = 1$ corresponds to the $8$th row of Table \ref{tab5}, while the case $x = p_1$ and $y = 2$ gives the $9$th row of  Table \ref{tab5}.\\
\noindent
\textbf{Subcase (b)} Let $p \equiv 3 \pmod 4$. Then, by Corollary \ref{C25}, we find that $ h_{K_3}$ is odd.\\
\textbf{I:} Assume that $x = 2pv$ and $y = v$.\\
\textbf{I(A):} Let $v = r \equiv 1 \pmod{4}$ be a prime. Then, $x = 2 p r$ and $y = r$. Set $p_1 = r$ and $q_1 = p$. By Corollary~\ref{C24}, it follows that $4 \mid h_{K_1}$ and $2 \mid h_{K_2}$. Therefore, Theorem~\ref{2.1} implies that $h_K \neq 6$, and consequently, this case is not possible.\\
\textbf{I(B):} Let $v = r \equiv 3 \pmod 4$ be a prime. Then, $x = 2pr$ and $y = r$. Set  $q_1 = r$ and $q_2 = p$. Then, this case corresponds to the $10$th row of  Table \ref{tab5}.\\
\textbf{I(C):} Let $v = 1$. Then, $x = 2p$ and $y = 1$. Set  $q_1 = p$. From this case, we obtain the $11$th row of  Table \ref{tab5}.\\

\noindent
\textbf{II:} Consider $x = pv$ and $y = 2v$. It is easy to see that if $v = r$ is a prime, then $r \not \equiv 3 \pmod 4$.\\
\textbf{II(A):} Let $v = r \equiv 1 \pmod 4$ be a prime. Then, $x = pr$ and $y = 2r$. Set $p_1 = r$ and $q_1 = p$. By Corollary \ref{C24}, we have $2 \mid h_{K_1}$ and $2 \mid h_{K_2}$. Thus, applying Theorem \ref{2.1}, we obtain the $12$th row of  Table \ref{tab5}.\\
\textbf{II(B):} Let $v = 1$. Then, we have $x = p$ and $y = 2$. Set $q_1 = p$. Corollary \ref{C24} implies that $h_{K_1}$ and $h_{K_2}$ are odd. Thus, to obtain $h_K = 6$, Theorem \ref{2.1} implies that $q(K) > 2$. This is not possible; therefore, we exclude this case. This completes the proof of Case (2).\\

\noindent
\textbf{(3).}
Let $h_K =6$ and $d_{xy} = pqr$ or $d_{xy} = 2pq$, where $p$, $q$ and $r$ are odd primes. Since $K_1$ and $K_2$ are imaginary number fields, for $d_{xy} = pqr$ the possible values of $x$ and $y$ are as follows:
\begin{itemize}
    \item $x = pqrv$ and $y = v$,
    \item  $x = pqv$ and $y = r v$,
\end{itemize}
where $v$ is a square-free positive integer. Similarly, for $d_{xy} = 2pq$ we have 
\begin{itemize}
\item  $x = 2pqv $ and $y= v$,
     \item $x = pqv$ and $y = 2v$,
    \item $x = 2pv$ and $y = qv$,
\end{itemize}
where $v$ is a square-free positive integer. Corollary~\ref{C25} implies that $2$ always divides $h_{K_3}$. Since $h_K = 6$, Corollary~\ref{C24} and Theorem~\ref{2.1} together ensure that $v = 1$. 
Note that the value of $d_{xy}$ can satisfy either 
$
d_{xy} \equiv 1, 3 \pmod{4} \quad \text{or} \quad d_{xy} \equiv 2 \pmod{4}.$

Furthermore, if $d_{xy} \equiv  3 \pmod 4$, then by Corollary \ref{C25} we have $4 \mid h_{K_3}$. Consequently, Proposition \ref{P21} implies that $4\mid h_K$. This shows that $h_K \neq 6$. Therefore, this case is ruled out.

\noindent
\textbf{Case (i):} Assume that $d_{xy} = p q r \equiv 1 \pmod{4}$. Based on this and the possible values of $(x, y)$, it suffices to consider the following three subcases: (a) $p, q, r \equiv 1 \pmod{4}$; (b) $p, q \equiv 3 \pmod{4} \text{ and } r \equiv 1 \pmod{4}$; (c) $p, r \equiv 3 \pmod{4} \text{ and } q \equiv 1 \pmod{4}.$

\noindent
\textbf{Subcase (a):} Consider $p, q, r \equiv 1 \pmod 4$. Then, by Corollary \ref{C25}, $4 \mid h_{K_3}$. Applying Proposition \ref{P21}, we obtain $4 \mid h_K$. Consequently, $h_K \neq 6$, and thus this case is ruled out.\\
\noindent\textbf{Subcase (b):} Assume that $p, q \equiv 3 \pmod{4}$ and $r \equiv 1 \pmod{4}$. In this subcase, we have $2 \mid h_{K_3}$. 
Consider either $x = p q r$ and $y = 1$, or $x = p q$ and $y = r$. Then, by Corollary~\ref{C24} and Theorem~\ref{2.1}, it follows that $h_K \neq 6$. 
Therefore, this case is not possible.\\
\noindent
\textbf{Subcase (c):} Assume that $p, r \equiv 3 \pmod 4$ and $q \equiv 1 \pmod 4$. Then, by Corollary \ref{C25}, we have $2 \mid h_{K_3}$. Thus, applying  Corollary \ref{C24} and Theorem \ref{2.1}, we obtain that the case $x = pqr$ and $y = 1$ is not possible.\\

Assume that $x = pq$ and $y = r$. Then by Corollary \ref{C24}, we have $2 \mid h_{K_1}$ and $h_{K_2}$ is odd. Thus,  Theorem \ref{2.1} implies that 
$$[h_{K_1},\ h_{K_2},\ h_{K_3},\ q(K)] \in \{[2, 3, 2, 1], [6, 1, 2, 1], [2,  1,  6, 1]\}.$$ This establishes the first row of Table \ref{tab6} by substituting $p_1, q_1, q_2$ for $q, p, r$, respectively.\\
\noindent
\textbf{Case (ii):} Let $d_{xy} = 2pq$.\\
Since $2 \mid h_{K_3}$, applying Corollary \ref{C24} and Theorem \ref{2.1} we obtain that $h_K \neq 6$ for the case $x = 2pq $ and $y= 1$. Thus, we discard this case.\\

Furthermore, by Corollary \ref{C24} and Theorem \ref{2.1}, it follows that $h_K \neq6$ for the case $p \equiv 1 \pmod 4$ and $q \equiv 1 \pmod 4$. Thus, we will not consider this case further.\\
\textbf{Subcase (a):} Consider $p \equiv 1 \pmod 4$ and $q \equiv 3 \pmod 4$. Then, we have $2 \mid h_{K_3}$.\\
\textbf{I:} Let $x = pq$ and $y = 2$. Then, $h_{K_2} = 1$, and by Corollary \ref{C24} we have $2 \mid h_{K_1}$. Now, from Theorem \ref{2.1}, we find that 

$$[h_{K_1},\ h_{K_2},\ h_{K_3},\ q(K)] \in \{[6, 1, 2, 1], [2, 1, 6, 1]\}.$$ Thus, relabeling $p$ and $q$ as $p_1$ and $q_1$, respectively, yields the second row of Table~\ref{tab6}.\\
\textbf{II:} Assume that $x = 2p$ and $y = q$. From this case, we obtain the $3$rd row of Table \ref{tab6} by applying Corollary \ref{C24} and Theorem \ref{2.1}, and by substituting $p_1$ and $q_1$ for $p$ and $q$ respectively.

\noindent
\textbf{Subcase (b):} Consider $p \equiv 3 \pmod 4$ and $q \equiv 1 \pmod 4$. Then, we have $2 \mid h_{K_3}$.
If we consider $x = pq$ and $y = 2$, then we again obtain the second row of Table \ref{tab6}. However, by taking $x = 2p$ and $y = q$, we do not obtain $h_K = 6$. \\
\textbf{Subcase (c):} Consider $p \equiv 3 \pmod 4$ and $q \equiv 3 \pmod 4$.\\
\textbf{I:} Let $x = pq$ and $y = 2$. Then, $h_{K_2} = 1$, and by Corollary \ref{C24} we have $4 \mid h_{K_1}$. Thus, from Theorem \ref{2.1}, we conclude that $h_K \neq 6.$\\
\textbf{II:} Assume that $x = 2p$ and $y = q$. Then, by Corollary \ref{C24}, we have $2 \mid h_{K_1}$ and $h_{K_2}$ is odd. Thus, applying Theorem \ref{2.1} and substituting $q_1$ and $q_2$ for $p$ and $q$, we obtain the $4$th row of Table \ref{tab6}. This completes the proof of Case (3), and thus the proposition follows. 
\end{proof}
\begin{proof}[Proof of Theorem \ref{T6}] Note that, in Proposition~\ref{P41}, we proved that if $d_{xy}$ has at least four distinct prime factors, then no field $K$ satisfies $h_K = 6$. Therefore, the number fields $K$ with class number $6$ obtained from Proposition~\ref{P42} are the only imaginary bicyclic biquadratic number fields with class number $6$.

By applying a similar procedure, as outlined in the first and second paragraphs of the proof of Theorem~\ref{T4}, to the tables presented in Proposition~\ref{P42}, we obtain $552$ fields $K$ with class number $6$. These fields are listed in Section~$8$. This completes the proof.
\end{proof}
\begin{rmk}
    We note that using Proposition \ref{P42} and \texttt{SageMath}, one can list all fields $K$ with class number $2p$, where $p$ is an odd prime and $4p \leq 100$. This is achieved by replacing $3$, $6$ and $12$ with $p$, $2p$ and $4p$ respectively, in the possible values of the tuple $[h_{K_1},\ h_{K_2},\ h_{K_3},\ q(K)]$ that appeared in Proposition \ref{P42}.
\end{rmk}

\section{Proof of Theorem \ref{T7}}
Using the method described in this article, one can list all imaginary bicyclic biquadratic fields with class number $7$. However, to list such fields $K$, we use the following proposition recorded from \cite{PJ}.
\begin{prop}\label{7.1}
The imaginary bicyclic biquadratic fields with odd class numbers are as follows:
\begin{enumerate}
    \item $\mathbb{Q}(\sqrt{-q_1}, \sqrt{-p_1q_1})$ with $\left( \frac{p_1}{q_1} \right) = \left( \frac{q_1}{p_1} \right) = -1$,
    \item $\mathbb{Q}(\sqrt{-1}, \sqrt{-p_1}),$ $ \mathbb{Q}(\sqrt{-2}, \sqrt{-2p_1})$ with $p_1 \equiv 5 \pmod{8}$,
    \item $\mathbb{Q}(\sqrt{-q_1}, \sqrt{-2q_1})$ with $q_1 \equiv 3 \pmod{8}$,
    \item $\mathbb{Q}(\sqrt{-1}, \sqrt{-2})$,
    \item $\mathbb{Q}(\sqrt{-1}, \sqrt{-q_1})$,
    \item $\mathbb{Q}(\sqrt{-2}, \sqrt{-q_1})$,
    \item $\mathbb{Q}(\sqrt{-q_1}, \sqrt{-q_2})$,
\end{enumerate}
where $q_1, q_2 \equiv 3 \pmod 4$ and $p_1 \equiv 1 \pmod 4$ are distinct odd primes.
\end{prop}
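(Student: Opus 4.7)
The plan is to analyze the class number formula of Theorem~\ref{2.1} in the form $2\, h_K = q(K)\, h_{K_1} h_{K_2} h_{K_3}$ with $q(K)\in\{1,2\}$, and to determine all admissible triples of quadratic subfields $(K_1, K_2, K_3)$ for which the right-hand side is $\equiv 2\pmod 4$. The $2$-adic valuation must satisfy $v_2(q(K)) + v_2(h_{K_1}h_{K_2}h_{K_3}) = 1$, splitting the problem into two scenarios: (i) $q(K)=2$ with all three $h_{K_i}$ odd, or (ii) $q(K)=1$ with exactly one $h_{K_i}\equiv 2 \pmod 4$ and the other two odd.

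First I would classify the quadratic fields of odd class number using Corollaries~\ref{C24} and~\ref{C25}. Genus theory forces the imaginary candidates to be $\mathbb{Q}(\sqrt{-1})$, $\mathbb{Q}(\sqrt{-2})$, or $\mathbb{Q}(\sqrt{-q})$ for a prime $q \equiv 3\pmod 4$, while the real candidates are $\mathbb{Q}(\sqrt{z})$ with $z \in \{2,\, p,\, q,\, 2q,\, q_1q_2\}$, where $p\equiv 1$ and $q,q_1,q_2\equiv 3\pmod 4$. The compatibility condition that the three quadratic subfields must arise from a single biquadratic field (equivalently, that $d_{K_1}d_{K_2}d_{K_3}$ is a square) restricts the possible triples to a short list, accounting after computation of $q(K)$ for rows (4)--(7) of the table.

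Next I would classify the imaginary quadratic fields with $h_F \equiv 2 \pmod 4$, i.e.\ those of $2$-rank exactly one and $4$-rank zero. Genus theory pins down the $2$-rank, leaving $\mathbb{Q}(\sqrt{-p})$, $\mathbb{Q}(\sqrt{-2p})$, $\mathbb{Q}(\sqrt{-2q})$, and $\mathbb{Q}(\sqrt{-pq})$ as the candidates. To eliminate the extra factor of $2$ coming from a possibly nontrivial $4$-rank, I would invoke the Redei--Reichardt criterion, which translates the vanishing of the $4$-rank into explicit Legendre- and quartic-residue symbol conditions. These yield precisely the congruences $p_1 \equiv 5 \pmod 8$ (row~(2)), $q_1 \equiv 3 \pmod 8$ (row~(3)), and the mutual non-residue condition $\left(\tfrac{p_1}{q_1}\right) = \left(\tfrac{q_1}{p_1}\right) = -1$ (row~(1)).

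Once the admissible triples $(K_1, K_2, K_3)$ are pinned down, it remains to determine whether $q(K)=1$ or $q(K)=2$ in each case, and this is the main obstacle. The unit index measures whether a unit of $\mathcal{O}_K$ can be built out of units of the three quadratic subfields, and its computation rests on deciding whether certain fundamental units (most notably the fundamental unit of the real subfield $K_3$) become squares in $K$. I would handle this step via Kuroda's explicit formula for the unit index in biquadratic CM fields, together with ramification-theoretic analysis of the units at the ramified primes in $K/K_3$; in the ambiguous cases (rows (4)--(7)), a direct construction of the missing unit suffices to show $q(K)=2$. The converse, that each listed family indeed satisfies $h_K$ odd, then follows by substituting the computed values into Theorem~\ref{2.1}. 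Since this classification is the content of~\cite{PJ}, the proposition is imported rather than reproved in the present paper.
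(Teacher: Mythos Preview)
Your proposal correctly identifies that the paper does not prove Proposition~\ref{7.1} at all: it is quoted verbatim from Conner--Hurrelbrink \cite{PJ}, exactly as you note in your final sentence. In that sense your ``proof'' and the paper's agree.

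The sketch you offer beyond this is a reasonable outline of the argument in \cite{PJ}, but one point deserves tightening. In your scenario~(ii), pairing an odd-class-number imaginary subfield with one of the four candidates $\mathbb{Q}(\sqrt{-p})$, $\mathbb{Q}(\sqrt{-2p})$, $\mathbb{Q}(\sqrt{-2q})$, $\mathbb{Q}(\sqrt{-pq})$ produces more triples than rows (1)--(3): for instance $\mathbb{Q}(\sqrt{-1},\sqrt{-2q_1})$ and $\mathbb{Q}(\sqrt{-2},\sqrt{-2q_1})$ both have all three subfield class numbers of the right parity and $K_3$ odd, yet they do not appear in the list. Excluding them requires showing $q(K)=2$ in those families, so the unit-index computation is doing more work than merely confirming the listed cases---it is also eliminating spurious ones. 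Conversely, for rows (4)--(7) no computation of $q(K)$ is needed: when all three $h_{K_i}$ are odd, $q(K)=2$ is forced by the integrality of $h_K$ in Theorem~\ref{2.1}.
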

Using Proposition \ref{7.1}, followed by Theorem~\ref{2.2} and then Theorem~\ref{2.1}, one can establish the following result.

\begin{prop}\label{P52} Let $K = \Q(\sqrt{-x}, \sqrt{-y})$. Then, $h_K =7$ if and only if $K$ is one of the fields listed in the table below:\\

\renewcommand{\arraystretch}{1.5} 
\begin{center}
$$
\begin{array}{|c|l|c|}
\hline
\# & \text{Field } K & [h_{K_1},\ h_{K_2},\ h_{K_3},\ q(K)] \\
\hline
1 & \mathbb{Q}(\sqrt{-q_1}, \sqrt{-p_1q_1}) & [1,\ 2,\ 7,\ 1],\ [7,\ 2,\ 1,\ 1],\ [1,\ 14,\ 1,\ 1]\\
\hline
2 & \mathbb{Q}(\sqrt{-1}, \sqrt{-p_1}) & [1,\ 2,\ 7,\ 1],\ [1,\ 14,\ 1,\ 1]\\
\hline
3 & \mathbb{Q}(\sqrt{-2}, \sqrt{-2p_1}) & [1,\ 2,\ 7,\ 1],\ [1,\ 14,\ 1,\ 1]\\
\hline
4 & \mathbb{Q}(\sqrt{-q_1}, \sqrt{-2q_1}) & [7,\ 2,\ 1,\ 1],\ [1,\ 14,\ 1,\ 1]\\
\hline
5 & \mathbb{Q}(\sqrt{-1}, \sqrt{-q_1}) & [1,\ 1,\ 7,\ 2],\ [1,\ 7,\ 1,\ 2]\\
\hline
6 & \mathbb{Q}(\sqrt{-2}, \sqrt{-q_1}) & [1,\ 1,\ 7,\ 2],\ [1,\ 7,\ 1,\ 2]\\
\hline
7 & \mathbb{Q}(\sqrt{-q_1}, \sqrt{-q_2}) & [1,\ 1,\ 7,\ 2],\ [1,\ 7,\ 1,\ 2]\\
\hline
\end{array}$$
\captionof{table}{}
\label{tab7}
\end{center}
\end{prop}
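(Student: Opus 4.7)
The plan is to leverage Proposition \ref{7.1} to cut down the problem dramatically. Since $h_K = 7$ is odd, every imaginary bicyclic biquadratic field $K$ with class number $7$ is forced to lie in one of the seven families enumerated in Proposition \ref{7.1}. Thus I would begin by fixing an arbitrary such $K$ and asserting, purely on the grounds that $h_K$ is odd, that $K$ has one of the seven prescribed shapes, together with the accompanying congruence / Legendre-symbol conditions on the primes involved. This turns the problem into a finite case analysis over seven configurations rather than an unrestricted search.

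The next step is to pin down, in each of the seven families, the three subfields $K_1, K_2, K_3$ (recalling that $K_3 = \mathbb{Q}(\sqrt{d_{xy}})$ is the real quadratic subfield) and to determine the parity of each $h_{K_i}$ using Theorem \ref{2.2} together with the Corollaries \ref{C24} and \ref{C25}. For instance, in Family 1 we have $d_{xy} = p_1$, so $K_3 = \mathbb{Q}(\sqrt{p_1})$ has odd class number, while $K_1 = \mathbb{Q}(\sqrt{-q_1})$ with $q_1 \equiv 3 \pmod 4$ is again odd, and $K_2 = \mathbb{Q}(\sqrt{-p_1q_1})$ has exactly $2$-rank one (because $p_1q_1$ has two distinct prime divisors and $-p_1q_1 \equiv 1 \pmod 4$). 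Similar, case-by-case parity bookkeeping for Families 2--7 localizes exactly which $h_{K_i}$ can be even, and to what 2-adic extent.

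Now I would combine this with Theorem \ref{2.1}: the identity $h_K = q(K) h_{K_1} h_{K_2} h_{K_3}/2$ with $h_K = 7$ yields
\[
q(K) \, h_{K_1} h_{K_2} h_{K_3} = 14, \qquad q(K) \in \{1, 2\}.
\]
Within each family, I would enumerate the factorizations of $14$ compatible with (i) the parity constraints obtained in the previous step and (ii) the requirement $q(K) \in \{1,2\}$. Because the total factor is only $14 = 2 \cdot 7$, there are very few admissible tuples in each family. For Families 1--4 the real subfield sits over a discriminant with multiple prime divisors (or over a single prime $\equiv 1 \pmod 8$ configuration) that permits $h_{K_3}$ to be either $1$ or $7$, while one of $h_{K_1}, h_{K_2}$ carries the even factor, producing the three tuples in rows 1--4. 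For Families 5--7 the 2-rank analysis forces all three subfield class numbers to be odd, so $q(K) = 2$ is required and the factor $7$ sits on $h_{K_2}$ or $h_{K_3}$, giving the two tuples in rows 5--7. Conversely, for each tuple displayed in Table \ref{tab7}, a direct substitution into Theorem \ref{2.1} confirms $h_K = 7$, providing the reverse implication.

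The main obstacle will be the careful bookkeeping in the case analysis: making sure that, family by family, one correctly identifies $d_{xy}$, the parity of each $h_{K_i}$, and the list of factorizations of $14$ that survive, and making sure not to introduce redundancies arising from the symmetry $(x,y) \leftrightarrow (y,x)$ within a family. None of the individual steps is deep — the work is entirely in faithfully executing the enumeration for each of the seven families.
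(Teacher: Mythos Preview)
Your proposal is correct and follows essentially the same approach as the paper, which simply states that Proposition~\ref{P52} is established ``using Proposition~\ref{7.1}, followed by Theorem~\ref{2.2} and then Theorem~\ref{2.1}'' without spelling out the case analysis. Your sketch makes the intended argument explicit: restrict to the seven families of Proposition~\ref{7.1}, use genus theory to pin down the parity of each $h_{K_i}$, and then enumerate the factorizations of $14$ permitted by Theorem~\ref{2.1}; the only minor slip is that in row~4 the real subfield is $\mathbb{Q}(\sqrt{2})$, so $h_{K_3}=1$ is forced rather than allowing $h_{K_3}\in\{1,7\}$, but this does not affect the method.
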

\begin{proof}[Proof of Theorem \ref{T7}] Using \texttt{SageMath} for each row of Table \ref{tab7} as described in the first and second paragraphs of the proof of Theorem~\ref{T4}, we obtain that there are exactly $277$ fields $K$ which have $h_K = 7$. These fields are presented in Section $9$. This completes the proof.
\end{proof}
\begin{rmk}
We wish to point out that using Proposition \ref{P52} and \texttt{SageMath}, one can list all fields $K$ with class number $p$, where $p$ is an odd prime and $2p \leq 100$. This can be achieved by replacing $7$ and $14$ with $p$ and $2p$, respectively, in the possible values of the tuple $[h_{K_1},\ h_{K_2},\ h_{K_3},\ q(K)]$  that appeared in Proposition \ref{P52}.
\end{rmk}

\section{Concluding Remarks}
We remark that if one considers a field $K$ of the form $\mathbb{Q}(\sqrt{-x}, \sqrt{y})$, then there exists a square-free positive integer $z$ such that $
\mathbb{Q}(\sqrt{-x}, \sqrt{y}) = \mathbb{Q}(\sqrt{-z}, \sqrt{y}).$
In this situation, the discriminants $d_{xy}$ and $d_{zy}$ may or may not be equal. If $d_{xy} \neq d_{zy}$, then by our method, the field $K$ can appear more than once. To avoid this redundancy, it is preferable to consider $K$ of the form $\mathbb{Q}(\sqrt{-x}, \sqrt{-y})$.

Several corollaries can be drawn from this article. For example, let $p$ be a prime. Then, the class number of $\mathbb{Q}(\sqrt{-p}, \sqrt{-2})$ cannot be $4$, and the class number of $\mathbb{Q}(\sqrt{-2p}, \sqrt{-1})$ equals $4$ for exactly two prime values of $p$. We also note that, analogous to Proposition~\ref{7.1}, one can classify all fields $K$ with even class number using the methodology used in this article.

\section{Fields $K$ with $h_K = 4$} 
{\bf The fields $K$ obtained from Table \ref{tab1} using \texttt{SageMath} are presented in the table below.}
\renewcommand{\arraystretch}{1.3}




127 & $\Q(\sqrt{-10},\sqrt{-19})$  & $\Z/{2\Z}$ & $1$ & $\Z/{2\Z}$ & $\Z/{2\Z} \times \Z/{2\Z}$ \\
\hline
128 & $\Q(\sqrt{-58},\sqrt{-7})$   & $\Z/{2\Z}$ & $1$ & $\Z/{2\Z}$ & $\Z/{2\Z} \times \Z/{2\Z}$ \\
\hline
129 & $\Q(\sqrt{-58},\sqrt{-67})$  & $\Z/{2\Z}$ & $1$ & $\Z/{2\Z}$ & $\Z/{2\Z} \times \Z/{2\Z}$ \\
\hline
 130 & $\Q(\sqrt{-34},\sqrt{-3})$   & $\Z/{4\Z}$ & $1$ & $\Z/{2\Z}$ & $\Z/{4\Z}$ \\
\hline
131 & $\Q(\sqrt{-34},\sqrt{-7})$   & $\Z/{4\Z}$ & $1$ & $\Z/{2\Z}$ & $\Z/{4\Z}$ \\
\hline
132 & $\Q(\sqrt{-34},\sqrt{-11})$  & $\Z/{4\Z}$ & $1$ & $\Z/{2\Z}$ & $\Z/{4\Z}$ \\
\hline
133 & $\Q(\sqrt{-82},\sqrt{-3})$   & $\Z/{4\Z}$ & $1$ & $\Z/{2\Z}$ & $\Z/{4\Z}$ \\
\hline
134 & $\Q(\sqrt{-82},\sqrt{-11})$  & $\Z/{4\Z}$ & $1$ & $\Z/{2\Z}$ & $\Z/{4\Z}$ \\
\hline
135 & $\Q(\sqrt{-82},\sqrt{-19})$  & $\Z/{4\Z}$ & $1$ & $\Z/{2\Z}$ & $\Z/{4\Z}$ \\
\hline
136 & $\Q(\sqrt{-82},\sqrt{-67})$  & $\Z/{4\Z}$ & $1$ & $\Z/{2\Z}$ & $\Z/{4\Z}$ \\
\hline
137 & $\Q(\sqrt{-6},\sqrt{-13})$   & $\Z/{2\Z}$ & $\Z/{2\Z}$ & $\Z/{2\Z}$ & $\Z/{2\Z} \times \Z/{2\Z}$ \\
\hline
138 & $\Q(\sqrt{-6},\sqrt{-37})$   & $\Z/{2\Z}$ & $\Z/{2\Z}$ & $\Z/{2\Z}$ & $\Z/{2\Z} \times \Z/{2\Z}$ \\
\hline
139 & $\Q(\sqrt{-22},\sqrt{-5})$   & $\Z/{2\Z}$ & $\Z/{2\Z}$ & $\Z/{2\Z}$ & $\Z/{2\Z} \times \Z/{2\Z}$ \\
\hline
140 & $\Q(\sqrt{-22},\sqrt{-37})$  & $\Z/{2\Z}$ & $\Z/{2\Z}$ & $\Z/{2\Z}$ & $\Z/{2\Z} \times \Z/{2\Z}$ \\
\hline
141 & $\Q(\sqrt{-42},\sqrt{-1})$ & $\Z/{2\Z} \times \Z/{2\Z}$ & $1$ & $\Z/{2\Z}$ & $\Z/{2\Z} \times \Z/{2\Z}$ \\
\hline
142 & $\Q(\sqrt{-21},\sqrt{-2})$ & $\Z/{2\Z} \times \Z/{2\Z}$ & $1$ & $\Z/{2\Z}$ & $\Z/{2\Z} \times \Z/{2\Z}$ \\
\hline
143 & $\Q(\sqrt{-93},\sqrt{-2})$ & $\Z/{2\Z} \times \Z/{2\Z}$ & $1$ & $\Z/{2\Z}$ & $\Z/{2\Z} \times \Z/{2\Z}$ \\
\hline
144 & $\Q(\sqrt{-133},\sqrt{-2})$ & $\Z/{2\Z} \times \Z/{2\Z}$ & $1$ & $\Z/{2\Z}$ & $\Z/{2\Z} \times \Z/{2\Z}$ \\
\hline
145 & $\Q(\sqrt{-6},\sqrt{-7})$ & $\Z/{2\Z}$ & $1$ & $\Z/{2\Z}$ & $\Z/{2\Z} \times \Z/{2\Z}$ \\
\hline
146 & $\Q(\sqrt{-22},\sqrt{-7})$ & $\Z/{2\Z}$ & $1$ & $\Z/{2\Z}$ & $\Z/{2\Z} \times \Z/{2\Z}$ \\
\hline
147 & $\Q(\sqrt{-14},\sqrt{-3})$   & $\Z/{4\Z}$ & $1$ & $\Z/{2\Z}$ & $\Z/{4\Z}$ \\
\hline
148 & $\Q(\sqrt{-14},\sqrt{-11})$  & $\Z/{4\Z}$ & $1$ & $\Z/{2\Z}$ & $\Z/{4\Z}$ \\
\hline
149 & $\Q(\sqrt{-14},\sqrt{-19})$  & $\Z/{4\Z}$ & $1$ & $\Z/{2\Z}$ & $\Z/{4\Z}$ \\
\hline
150 & $\Q(\sqrt{-14},\sqrt{-43})$  & $\Z/{4\Z}$ & $1$ & $\Z/{2\Z}$ & $\Z/{4\Z}$ \\
\hline
151 & $\Q(\sqrt{-14},\sqrt{-67})$  & $\Z/{4\Z}$ & $1$ & $\Z/{2\Z}$ & $\Z/{4\Z}$ \\
\hline
152 & $\Q(\sqrt{-14},\sqrt{-163})$ & $\Z/{4\Z}$ & $1$ & $\Z/{2\Z}$ & $\Z/{4\Z}$ \\
\hline
153 & $\Q(\sqrt{-46},\sqrt{-3})$   & $\Z/{4\Z}$ & $1$ & $\Z/{2\Z}$ & $\Z/{4\Z}$ \\
\hline
154 & $\Q(\sqrt{-46},\sqrt{-43})$  & $\Z/{4\Z}$ & $1$ & $\Z/{2\Z}$ & $\Z/{4\Z}$ \\
\hline
155 & $\Q(\sqrt{-46},\sqrt{-67})$  & $\Z/{4\Z}$ & $1$ & $\Z/{2\Z}$ & $\Z/{4\Z}$ \\
\hline
156 & $\Q(\sqrt{-46},\sqrt{-163})$ & $\Z/{4\Z}$ & $1$ & $\Z/{2\Z}$ & $\Z/{4\Z}$ \\
\hline
157 & $\Q(\sqrt{-142},\sqrt{-3})$  & $\Z/{4\Z}$ & $1$ & $\Z/{2\Z}$ & $\Z/{4\Z}$ \\
\hline
158 & $\Q(\sqrt{-142},\sqrt{-11})$ & $\Z/{4\Z}$ & $1$ & $\Z/{2\Z}$ & $\Z/{4\Z}$ \\
\hline
159 & $\Q(\sqrt{-142},\sqrt{-19})$ & $\Z/{4\Z}$ & $1$ & $\Z/{2\Z}$ & $\Z/{4\Z}$ \\
\hline
160 & $\Q(\sqrt{-142},\sqrt{-43})$ & $\Z/{4\Z}$ & $1$ & $\Z/{2\Z}$ & $\Z/{4\Z}$ \\
\hline
161 & $\Q(\sqrt{-142},\sqrt{-67})$ & $\Z/{4\Z}$ & $1$ & $\Z/{2\Z}$ & $\Z/{4\Z}$ \\
\hline
\end{longtable}
\end{center}
\vspace{0.3cm}
\section{Fields $K$ with $h_K = 6$}  \vspace{0.3cm}

Note that $h_K = 6$ implies $C\ell(K) \simeq \mathbb{Z}/6\mathbb{Z}$. Therefore, in this case, we omit the column for $C\ell(K)$ from all the tables presented in this section. The 1st, 2nd, 3rd,  4th and 5th columns of the tables are for the field numbering, the fields $K$, and the class groups $C\ell(K_1)$, $C\ell(K_2)$, and $C\ell(K_3)$, respectively. The last five columns are assigned similarly. The fields $K$ obtained from Table~\ref{tab4} using \texttt{SageMath} are presented in the table below. 
\vspace{0.4cm}
\begin{center}

\end{center}

\end{minipage}

\vspace{2cm}


\noindent
\section{Fields $K$ with $h_K = 7$}
\vspace{1cm}
Note that $h_K = 7$ implies that  $C\ell(K)$ is isomorphic to $\Z/{7\Z}$. Using \texttt{SageMath} for Table \ref{tab7}, we obtain that $h_{K_1} = 1$ and $h_{K_3} = 1$ whenever $h_K = 7$.  Therefore, we omit the columns for $C\ell(K_1)$, $C\ell(K_3)$ and $C\ell(K)$ from the tables presented in this section. All the fields $K$ with $h_K = 7$ are displayed in the table below.
\newpage
\begin{table}[H]
\centering
\caption{\textbf{  $h_K = 7$}}\label{tab14}
\end{table}
\begin{minipage}{0.45\linewidth}
\begin{center}

\end{center}

\end{document}